\newtheorem{lemma}{Lemma}%[section]
\newtheorem{proposition}[lemma]{Proposition}
\newtheorem{remark}[lemma]{Remark}
\newtheorem{theorem}[lemma]{Theorem}
\newcommand{\e}{\varepsilon} 
\title{Non-local singular perturbations \\ of non-convex functionals -- recent results}
\author{Andrea Braides\footnote{email: {\tt braides@mat.uniroma2.it}}\\ 
{\small Department of Mathematics, University of Rome Tor Vergata,}\\ {\small via della ricerca scientifica 1, Rome (Italy)}}
\date{}
\begin{document} 

\maketitle
\begin{abstract} 
Singular perturbations have been used to select solutions of (non-convex) variational problems with a multiplicity of minimizers. The prototype of such an approach is the gradient theory of phase transitions by L.~Modica, who specialized some earlier Gamma-convergence results by himself and S.~Mortola contained in a seminal paper, validating the so-called minimal-interface criterion. I will give an overview of some recent results on perturbations with fractional and higher-order seminorms both in the framework of phase transitions and of free-discontinuity problems, relating these results with the Bourgain-Brezis-Mironescu and Maz'ya-Shaposhnikova limit analysis for fractional Sobolev seminorms, and with the theory of Gamma-expansions.
\smallskip

{\bf MSC codes:} 49J45, 35B25, 82B26, 35R11.

{\bf Keywords:} $\Gamma$-convergence, non-convex energies, higher-order fractional Sobolev spaces, phase-transition problems, free-discontinuity problems.

\end{abstract}

\section{Introduction}
I begin with some personal recollections about the early times of $\Gamma$-convergence, for which singular perturbation problems were a prototypical issue. When I started my degree thesis work under the guidance of Ennio De Giorgi in 1981, the theoretical foundations of $\Gamma$-convergence had been laid and it was already a stable theory. I got my personal idea of what had been the developments of this theory in the preceding years from those who directly worked with De Giorgi at the first developments, from Tullio Franzoni, to Sergio Spagnolo, to Luciano Modica, who was the first to give a course on $\Gamma$-convergence at Pisa University in the academic year 1982-1983. 

An example that De Giorgi liked to quote was (one-dimensional) elliptic homogenization, with an oscillating coefficient taking two values (he called it the `Greek function' as he would freely associate its graph to an hellenic ornamental motive). Indeed, $\Gamma$-convergence had directly stemmed from the interest in homogenization problems for elliptic operators, that De Giorgi and Spagnolo interpreted as a convergence of the energies. Even though $\Gamma$-convergence was then born as a variational counterpart of $G$-convergence, the convergence of Green's operators for elliptic problems, it was clear to De Giorgi that the flexibility of a purely topological convergence made the new notion not bound to a specific class of problems or structure of the functionals (as had been Mosco convergence for convex functionals). Beside the theoretical developments leading to the paper in collaboration with Franzoni and successive efforts to explore all possible variants of the convergence, De Giorgi was keen on obtaining results showing how $\Gamma$-convergence was not restricted to the usual applications to homogenization, since it needed no regularity, convexity or even coerciveness assumptions. I recall the paper by Buttazzo and Dal Maso \cite{MR527419}, where an example is given of convergence of non-convex and non-equi-Lipschitz integral functionals with a loss of smoothness, and also my thesis work was about the development of a homogenization theory for non-coercive energies \cite{MR766686}. In this context the work of Modica and Mortola \cite{MM} was aimed at showing how the type of functional obtained as a $\Gamma$-limit may be completely different from the converging ones. The 
functionals they examine have the form
\begin{equation}
F_\e(u)=\frac1\e\int_\Omega W\Big(\frac u\e\Big)dx+ \e \int_\Omega|\nabla u|^2dx,
\end{equation}
with the explicit choice $W(z)=\sin^2(\pi z)$, but their result is valid for any $1$-periodic non-negative continuous function $W$ vanishing only on $\mathbb Z$. The corresponding $\Gamma$-limit in the $L^1$-topology is then simply
\begin{equation}\label{2}
F_0(u)=m_W\|Du\|(\Omega),\qquad m_W=2\int_0^1\sqrt{W(s)}ds
\end{equation}
with domain $BV(\Omega)$ and with $\|Du\|(\Omega)$ denoting the total variation of $Du$ on $\Omega$. A second result in the paper concerns the functionals 
\begin{equation}\label{3}
F_\e(u)=\frac1\e\int_\Omega W(u)dx+ \e \int_\Omega|\nabla u|^2dx,
\end{equation}
which differ from the first ones in the first term, now forcing in the limit $u$ to take integer values.
The $\Gamma$-limit has the same form of the first one, but with domain $BV(\Omega;\mathbb Z)$ 
the set of $BV$ functional taking values in $\mathbb Z$. 
Even though Modica and Mortola justify their analysis with possible applications to minimal surfaces, the application of their (second) result to phase-transition problems had to wait for almost a decade.

\bigskip
Singular perturbations are ofter used to obtain approximate solutions to problems which possess no solution or infinitely many ones, or to give a selection criterion among different solutions. One can think of the theory of viscosity solutions of non-linear PDE as an example. In a variational setting, a simple way to obtain solutions is by adding terms with (higher-order) derivatives, whose form is often dictated by modeling considerations. The prototypical example is
$$
\int_\Omega W(u)dx+\e^2\int_\Omega|\nabla u|^2dx,
$$
with $W$ a non-convex potential. The limit of solutions $u_\e$ to specific problems obtained using this energies are particular solutions to the corresponding problem with $\e=0$ characterized by additional properties, the type of properties depending on the non-convexity of $W$, which can be of {\em double-well} type, or with a {\em non-convexity at infinity}. 

Potentials of double-well type $W$ are such that their convex envelope differs from $W$ in bounded intervals, in which case, up to adding a linear term, we can consider $W$ with two minimizers.  This is the case of the Cahn--Hilliard theory of phase separations \cite{CH}, for which a {\em minimal-interface criterion} was conjectured (e.g.~by Gurtin  \cite{gurtin}) stating that limit of minima satisfying (suitable) volume constraints take value only in the set of the two minimizers of $W$ and the corresponding two sets are characterized as having a minimal interface of separation. In order to properly state such a convergence result one has to use the theory of {\em sets of finite perimeter} and of functions of bounded variation. The proof of the minimal-interface criterion was then achieved by Modica \cite{Modica} using the $\Gamma$-convergence arguments by Modica and Mortola applied to energies \eqref{3}, reinterpreting their limit as an interfacial energy, which can be written as
$$
m_W {\rm Per}(E,\Omega);
$$
that is, the {\em perimeter} of the set $E$ in $\Omega$, weighted by the {\em surface tension} $m_W$. The reason why the set $E$ appears is that sequences $u_\e$ with equibounded $F_\e(u_\e)$ are precompact in measure, so that by the boundedness of the first term in \eqref{3} their limit $u$ satisfies $u(x)\in\{-1,1\}$ and $u=-1+2\chi_E$, where $E=\{x: u(x)=1\}$.

\smallskip
Potentials with a non-convexity at infinity have been treated in a different functional setting, considering perturbations depending on $\nabla u$ instead of perturbations depending on $u$. The simplest case is when $W$ is the truncated quadratic potential $W(z)=\min\{|z|^2,1\}$. After some scaling arguments, the perturbed functionals  have the form
\begin{equation}\label{4}
F_\e(u)=\frac1\e\int_\Omega W(\sqrt\e |\nabla u|)dx+ \e^3 \int_\Omega|\nabla^2 u|^2dx,
\end{equation}
defined on $H^2(\Omega)$. The $\Gamma$-limit of these functionals have been studied by Alicandro et al \cite{ABG} and Bouchitt\'e at al \cite{bouds} in the framework of the approximation of {\em free-discontinuity problems}. The $\Gamma$-limit is defined in the space of {\em special functions of bounded variation}, where it
takes the form
\begin{equation}\label{4}
F(u)=\int_\Omega |\nabla u|^2 dx+ m^\infty_2 \int_{S(u)\cap\Omega}\sqrt{|u^+(x)-u^-(x)|}d\mathcal H^{d-1},
\end{equation}
where $S(u)$ is the set of {\em discontinuity points} of $u$ and $u^\pm$ are the traces of $u$ on both sides of $S(u)$. Note that if $d=1$ then we can take as a variable $v=u'$ and reread $F$ as a functional defined on spaces of measures, with domain measures $\mu$ which admit a decomposition as a sum of a part absolutely continuous with respect to the Lebesgue measure and a series of Dirac deltas. In order to avoid the description in spaces of measures we use the variable $u$ through its gradient. 

\smallskip
Recent developments have made it possible to extend the theory  of singular perturbations to the case with fractional derivatives of any order. In the case of double-well potentials, $k$ integer and $s\in (0,1)$, we study energies
\begin{equation}\label{m+k-0}
\int_\Omega W(u)dx+ \e^{2(k+s)} \int_{\Omega\times \Omega}\frac{|\nabla^{k}u(x)-\nabla^{k}u(y)|^2}{|x-y|^{d+2s}}dx\,dy.
\end{equation}
In order to obtain a (local) perimeter functional, the correct scaling is obtained by dividing by $\e$, and considering the functionals 
\begin{equation}\label{m+k}
F^{k+s}_\e(u)=\frac1\e\int_\Omega W(u)dx+ \e^{2(k+s)-1} \int_{\Omega\times \Omega}\frac{|\nabla^{k}u(x)-\nabla^{k}u(y)|^2}{|x-y|^{d+2s}}dx\,dy.
\end{equation}
with $k+s>\frac12$, so that we still have a singular perturbation after scaling.
The limit is still a perimeter functional, which is in general of the form
\begin{equation}\label{m+k-limit}
F^{k+s}(E)=\int_{\partial E}\varphi_{k+s}(\nu)d\mathcal H^{d-1},
\end{equation}
where $\nu$ is the normal to $E$ at the points in $\partial E$. In the one-dimensional case $d=1$ the characterization of the perimeter reduces to that of the surface tension $m_{k+s}$. This paper examines the recent and less recent contributions by Alberti et al.~\cite{ABS}, Fonseca and Mantegazza \cite{FM}, Savin and Valdinoci \cite{SV}, Palatucci and Vincini \cite{PaVi}, Brusca et al. \cite{BDS}, Solci \cite{Solci}, Picerni \cite{Picerni}, Braides et al.~\cite{BOP} in a unified setting, showing in particular how a common line of proof can be followed, that is also compatible with the extension to the integers case, formally $s=0$ and $k\ge 1$, and also to the limit degenerate case $s=\frac12$. 

The case of potentials with a non-convexity at infinity is much more complex since it requires the description of an interfacial energy $\varphi$ also depending on the discontinuity size $|u^+(x)-u^-(x)|$. In a very complex work, using delicate localized interpolation inequalities Solci \cite{MR4937467} has dealt with the higher-dimensional integer case  
\begin{equation}\label{m+k}
F^{k}_\e(u)=\frac1\e\int_\Omega W(u)dx+ \e^{2k-1} \int_{\Omega} \|\nabla^{k}u\|^2dx,
\end{equation}
where $k\ge 2$ is an integer and $\|\cdot\|$ is the operator norm, showing that the limit is 
\begin{equation}\label{4}
F(u)=\int_\Omega |\nabla u|^2 dx+ m^\infty_k \int_{S(u)}\root k \of {|u^+(x)-u^-(x)|}d\mathcal H^{d-1},
\end{equation}
with a proper characterization of $m^\infty_k$. The case of fractional perturbations or norms different from the operator norm is almost completely open.
The content of this paper is a (very condensed) part of a course held in SISSA, Trieste in the 2024.

\section{Singular perturbations of double-well energies}
We focus on the one-dimensional case $d=1$.
The double-well potential $W\colon\mathbb R\to [0,+\infty)$ is a continuous function satisfying

\smallskip i) $W(z)=0$ if and only if $z\in\{-1,1\}$;

\smallskip ii) there exist the second derivatives $W''(-1)$ and $W''(1)$ and are strictly positive;

\smallskip iii) $\liminf\limits_{|z|\to+\infty} W(z)>0$.

\smallskip The scaled perturbed functionals take the form 
 \begin{eqnarray}\label{k+s}\nonumber
F^{k+s}_\e(u)&=&F^{k+s}_\e(u, I)\\
&=&\frac1\e\int_I W(u)dx+ \e^{2(k+s)-1} \int_{I\times I}\frac{|u^{(k)}(x)-u^{(k)}(y)|^2}{|x-y|^{1+2s}}dx\,dy,
\end{eqnarray}
with domain the fractional Sobolev space $H^{k+s}(I)$ of the functions $u\in H^k(I)$ such that the double integral in \eqref{k+s} (the square of the corresponding Gagliardo seminorm of $u^{(k)}$) is finite; $I$ is a bounded open interval of $\mathbb R$. We refer to \cite{leofrac} for an introduction to these fractional Sobolev spaces.

A key feature of energies \eqref{k+s} is their behaviour under scaling. More precisely, if we set $v(t)=u(\e t)$ then we have
 \begin{equation}\label{k+s-risc}
F^{k+s}_\e(u)=F^{k+s}_1(v, \tfrac1\e I)=\int_{\frac1\e I} W(v)dx+ \int_{\frac1\e I\times \frac1\e I}\frac{|v^{(k)}(x)-v^{(k)}(y)|^2}{|x-y|^{1+2s}}dx\,dy.
\end{equation}
After this change of variables we can work on functionals depending on $\e$ only through their domains. For convenience of notation, we write $F^{k+s}$ in the place of $F^{k+s}_1$.

We will show that $F^{k+s}_\e$ are equi-coercive with respect to the convergence in measure and their limit is a one-dimensional sharp-interface functional $F^{k+s}$, with domain piecewise-constant functions taking only the values $-1$ and $1$, which we can identify with $BV(I;\{-1,1\})$. On such functions, this functional is simply
 \begin{equation}\label{k+s-lim}
F^{k+s}(u)= m_{k+s} \#(S(u)),
\end{equation}
determined by the non-zero {\em surface tension} $m_{k+s}$; $S(u)$ denotes the set of discontinuity points of $u$.

\subsection{Coerciveness in measure}
For the classical Modica--Mortola result, coerciveness can be obtained by estimating the number of {\em transition intervals}; that is, with fixed  $\eta\in(0,\tfrac12)$, intervals $(x_1,x_2)$ such that $x_1=-1+\eta$ and $x_2=1-\eta$.
This is obtained by noting that the term $\int_{x_1}^{x_2}|u'|^2dx$ is at least $\tfrac1{|x_2-x_1|}$, so that the energy carried by such an interval is at least $ C_\eta\tfrac{|x_2-x_1|}\e+\tfrac\e{|x_2-x_1|}\ge 2\sqrt{C_\eta}>0$. This implies that $|x_2-x_1|\sim\e$ and that the number of such transitions is equi-bounded. Applying the Bolzano--Weierstrass Theorem we can suppose that actually the transition intervals are independent of $\e$, and the compactness follows by noting that if $u_\e>1-\eta$ (or $u_\e<1-\eta$) on a fixed interval then $u_\e$ tends to $1$ (or to $-1$, respectively). Note that the optimization of this argument actually proves the lower bound with $c_W$.  

In the general fractional case, if $k\ge 1$ this argument localized on transition intervals is not possible, simply considering functions $u_\e$ affine on a transition interval for which the Gagliardo seminorm is $0$. The compactness argument must then use some non-local properties, which,  loosely speaking, provide some extra boundary conditions to transition intervals, and is separate from the computation of the lower bound. It relies on the following local interpolation result by Solci \cite[Lemma 6]{Solci}.

\begin{lemma}[local interpolation lemma]\label{lemmac}
If $F^{k+s}_\e(u_\e)\le S<+\infty$ for all $\e>0$, then there exists $\overline \eta>0$ such that for all $\eta\in (0,\overline\eta)$ there exists a constant $L$ such that for all intervals $J_\e$ such that $|J_\e|\ge \e L$ and $||u_\e|-1|\le \eta$ there exists at least a point $t\in J_\e$ such that $|u_\e^{(\ell)}(t)|\le \e^{-\ell} \eta$ for all $\ell\in\{1,\ldots, k-1\}$.
\end{lemma}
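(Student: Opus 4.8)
The plan is to rescale, use the double-well term to gain an $L^2$-bound on $v_\e-1$ over $\tfrac1\e J_\e$, then localize by an averaging argument to a fixed-length interval on which this bound becomes small, and finally close by a fractional Gagliardo--Nirenberg inequality. First I would rescale as in \eqref{k+s-risc}: with $v_\e(t)=u_\e(\e t)$ we have $F^{k+s}(v_\e,\tfrac1\e I)\le S$, the interval $J:=\tfrac1\e J_\e$ satisfies $|J|\ge L$ and $||v_\e|-1|\le\eta$ on $J$, and $\int_{J\times J}|x-y|^{-1-2s}|v_\e^{(k)}(x)-v_\e^{(k)}(y)|^2\,dx\,dy\le S$. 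Since $v_\e^{(\ell)}(t)=\e^{\ell}u_\e^{(\ell)}(\e t)$, the conclusion is equivalent to finding $\tau\in J$ with $|v_\e^{(\ell)}(\tau)|\le\eta$ for $\ell=1,\dots,k-1$; and since $\eta<1$ and $v_\e$ is continuous, $v_\e$ has constant sign on $J$, so I may assume $v_\e\in[1-\eta,1+\eta]$ there (the case $v_\e\in[-1-\eta,-1+\eta]$ is identical).

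Next I would use assumption ii), which provides $\overline\eta\in(0,\tfrac12)$ and $c>0$ with $W(z)\ge c(|z|-1)^2$ whenever $||z|-1|\le\overline\eta$ (this is the $\overline\eta$ of the statement). For $\eta\le\overline\eta$, combining $||v_\e|-1|\le\eta$ on $J$ with $\int_{\frac1\e I}W(v_\e)\,dx\le S$ gives
\[
\int_J(v_\e-1)^2\,dx\le\frac1c\int_J W(v_\e)\,dx\le\frac Sc=:S_1^2 ,
\]
a bound on $\|v_\e-1\|_{L^2(J)}$ that is independent of $\eta$ and $\e$. This is the crucial gain: although $v_\e$ is only $\eta$-close to $1$ pointwise, it is $O(1)$-close in $L^2$ over the whole long interval, hence much closer on average. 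Accordingly, averaging $a\mapsto\int_a^{a+1}(v_\e-1)^2\,dt$ over $a$ (assuming $L\ge2$) produces a unit subinterval $J_0\subset J$ with $\|v_\e-1\|_{L^2(J_0)}^2\le\frac{2}{|J|}\|v_\e-1\|_{L^2(J)}^2\le 2S_1^2/L$.

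On the fixed-length interval $J_0$ I would then apply the fractional Gagliardo--Nirenberg inequality
\[
\|w^{(k)}\|_{L^2(J_0)}\le C\Big([w^{(k)}]_{H^s(J_0)}^{\theta}\,\|w\|_{L^2(J_0)}^{1-\theta}+\|w\|_{L^2(J_0)}\Big),\qquad\theta=\frac{k}{k+s}\in(0,1),
\]
together with the one-dimensional embedding $\|w^{(\ell)}\|_{L^\infty(J_0)}\le C(\|w^{(k)}\|_{L^2(J_0)}+\|w\|_{L^2(J_0)})$ valid for $\ell\le k-1$, to $w=v_\e-1$. Using $[v_\e^{(k)}]_{H^s(J_0)}^2\le S$ and the previous step, for $L$ large this yields $\|v_\e^{(\ell)}\|_{L^\infty(J_0)}\le C_0\,L^{-(1-\theta)/2}$ for all $\ell=1,\dots,k-1$, with $C_0=C_0(W,k,s,S)$. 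It then suffices to take $L\ge\max\{2,(C_0/\eta)^{2(k+s)/s}\}$ to make this $\le\eta$; every $\tau\in J_0$ then satisfies $|v_\e^{(\ell)}(\tau)|\le\eta$ for $\ell=1,\dots,k-1$, and undoing the rescaling gives the point $t=\e\tau\in J_\e$ of the statement.

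The main obstacle is this interpolation step: one has to control $\|w^{(k)}\|_{L^2(J_0)}$ by the \emph{seminorm} $[w^{(k)}]_{H^s(J_0)}$, which vanishes on constants and so admits no elementary bound, together with $\|w\|_{L^2(J_0)}$. On $\mathbb R$ this is immediate from Plancherel and Hölder with exponent $\theta=k/(k+s)$; on a bounded interval it requires an extension (or localization) argument, for which I would refer to \cite{leofrac}. Note also that it is precisely the presence of the lower-order term $\|w\|_{L^2(J_0)}$ that forces the localization to a fixed-length interval: applied directly on $J$ the same inequality would only yield an $O(1)$ bound on $\|v_\e^{(\ell)}\|_{L^\infty}$, not the vanishing bound the conclusion requires.
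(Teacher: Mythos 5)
Your proposal is correct and follows essentially the same route as the paper's (very condensed) proof: it exploits assumption (ii) to bound $W$ from below by $\alpha(|z|-1)^2$ near the wells, thereby converting the bound on the double-well term into an $L^2$ bound on $u_\e\mp1$, and then closes with fractional interpolation inequalities applied to that function. The explicit localization to a unit subinterval by averaging, and the quantitative choice of $L$ in terms of $\eta$, are exactly the details the paper delegates to \cite[Lemma 6]{Solci}, and your treatment of them is sound.
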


\begin{proof} The proof follows by interpolation inequalities for fractional Sobolev spaces applied to either the function $u_\e-1$ or $u_\e+1$, after noting that by assumption (ii) on $W$ there exist positive constants $\alpha, \beta,\overline\eta$ such that $\alpha(|z|-1)^2\le W(z)\le \beta (|z|-1)^2$ if $||z|-1|<\overline\eta$. 
\end{proof}

We have the following compactness theorem  \cite[Theorem 2]{Solci}.

\begin{theorem}[Compactness]\label{compth} 
Let $\{u_\e\}$ be a family in $H^{k+s}(I)$ such that $F_\e(u_\e)\le S<+\infty$ for all $\e$. Then, there exist $u\in BV(I;\{-1,1\})$ such that, up to subsequences, $u_{\e}\to u$ in measure. 
 Furthermore, if  $W$ satisfies $W(z)\ge c_1|z|^p-c_2$ for some $c_1,c_2$ and $p\ge 1$, then $u_{\e}\to u$ in $L^p(I)$.
\end{theorem}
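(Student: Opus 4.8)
\medskip
\noindent\textbf{Proof proposal.} The plan is to use, as in the Modica--Mortola argument, the two terms of the energy for two different purposes, after passing to the rescaled functions $v_\e(t)=u_\e(\e t)$ on $J_\e=\frac1\e I$, for which $F^{k+s}(v_\e,J_\e)\le S$ by \eqref{k+s-risc}. \emph{Step 1 (proximity to the wells).} From (i), (iii) and continuity of $W$, for every $\eta\in(0,1)$ there is $c_\eta>0$ with $W(z)\ge c_\eta$ whenever $\mathrm{dist}(z,\{-1,1\})\ge\eta$. Since $\int_{J_\e}W(v_\e)\le S$, the set where $\mathrm{dist}(v_\e,\{-1,1\})\ge\eta$ has measure $\le S/c_\eta$; rescaling, the bad set $T^\eta_\e=\{x\in I:\ \mathrm{dist}(u_\e(x),\{-1,1\})\ge\eta\}$ satisfies $|T^\eta_\e|\le\e S/c_\eta\to0$. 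Hence on $I\setminus T^\eta_\e$ the phase $p_\e:=\mathrm{sign}(u_\e)\in\{-1,1\}$ is well defined with $|u_\e-p_\e|\le\eta$, and every limit in measure of a subsequence takes values in $\{-1,1\}$ a.e.

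\emph{Step 2 (uniformly finitely many transitions).} This is the only step that uses the perturbation term, it is where the non-locality enters, and I expect it to be the main difficulty. The heart is a \emph{fundamental estimate}: there are $\eta_0,c_0>0$ such that, for $\eta<\eta_0$, whenever $v_\e$ on an interval $(\alpha,\beta)$ runs from the $+1$-well to the $-1$-well with $|v_\e^{(\ell)}(\alpha)|,|v_\e^{(\ell)}(\beta)|\le\eta$ for $\ell=1,\dots,k-1$, then $F^{k+s}(v_\e,(\alpha,\beta))\ge c_0$. Heuristically, a window of small length $\lambda$ would force a slope of order $1/\lambda$ while the lower-order derivatives are $O(\eta)$ at the endpoints, hence $v_\e^{(k)}$ of size $\sim\lambda^{-k}$ on a set of length $\sim\lambda$; the Gagliardo $H^s$-seminorm of such a profile is of order $\lambda^{-(2(k+s)-1)}$, which blows up exactly because $2(k+s)-1>0$, so the window cannot be short and the potential term over it (where $v_\e$ is bounded away from $\pm1$) is bounded below. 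Granting this, Lemma~\ref{lemmac} supplies, on each sufficiently long monophase interval, an interior point with the required small derivatives; using such points as the endpoints of the transition windows makes these windows pairwise disjoint, and superadditivity of $F^{k+s}$ over disjoint intervals gives $S\ge F^{k+s}(v_\e,J_\e)\ge c_0\,N_\e$, where $N_\e$ is the number of transitions. Thus $N_\e\le S/c_0$, uniformly in $\e$; organising the possibly numerous and short monophase intervals so that the lemma applies and the transitions isolate is the technical core, carried out in \cite{Solci}.

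\emph{Step 3 (compactness and identification of the limit).} Fix $\eta<\eta_0$. As $u_\e$ has at most $N=\lfloor S/c_0\rfloor$ transitions and $|T^\eta_\e|=O(\e)$, define $w_\e\colon I\to\{-1,1\}$ piecewise constant, with at most $N$ jumps, equal to $p_\e$ on $I\setminus T^\eta_\e$ (extended across each component of $T^\eta_\e$ by the adjacent phase). Then $\{w_\e\}$ is bounded in $BV(I)$, so by Helly's theorem a subsequence converges in $L^1$, and a further subsequence a.e., to some $u^{(\eta)}\in BV(I;\{-1,1\})$ ($\{-1,1\}$-valued by a.e.\ convergence, $BV$ by lower semicontinuity of the variation). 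Since $|u_\e-w_\e|\le\eta$ off $T^\eta_\e$ and $|T^\eta_\e|\to0$, we get $|\{|u_\e-u^{(\eta)}|>\delta\}|\to0$ for every $\delta>\eta$. Running this along $\eta_j\downarrow0$ with nested subsequences and taking the diagonal, one obtains a single subsequence with $u_\e\to u^{(\eta_j)}$ up to scale $\eta_j$ for all $j$; a triangle inequality then gives $|u^{(\eta_j)}-u^{(\eta_{j'})}|\le2\eta_j$ a.e.\ for $j\le j'$, which, as these functions take values in $\{-1,1\}$, forces $u^{(\eta_j)}=u^{(\eta_{j'})}$ a.e.\ once $\eta_j<\frac12$. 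Denoting by $u$ this common value, $u\in BV(I;\{-1,1\})$ and $u_\e\to u$ in measure.

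\emph{Step 4 ($L^p$-convergence).} If $W(z)\ge c_1|z|^p-c_2$, then $\int_I W(u_\e)\le\e S$ gives $\int_I|u_\e|^p\le\frac1{c_1}(\e S+c_2|I|)$ bounded and also $\int_{T^\eta_\e}|u_\e|^p\le\frac1{c_1}(\e S+c_2|T^\eta_\e|)\to0$; since $|u_\e|\le1+\eta$ on $I\setminus T^\eta_\e$ this yields $\limsup_\e\int_I|u_\e|^p\le(1+\eta)^p|I|$ for all $\eta$, hence $\limsup_\e\|u_\e\|_{L^p}^p\le|I|=\|u\|_{L^p}^p$. With $u_\e\to u$ in measure and Fatou's lemma we conclude $\|u_\e\|_{L^p}\to\|u\|_{L^p}$ and therefore $u_\e\to u$ in $L^p(I)$.
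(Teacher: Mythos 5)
Your proposal is correct and follows essentially the same route as the paper: both reduce compactness to a uniform bound on the number of transitions, obtained by combining Lemma~\ref{lemmac} (which supplies transition endpoints with controlled derivatives of order up to $k-1$) with the fact that each such transition then carries a definite amount of energy --- your quantitative ``fundamental estimate'' is just the contrapositive of the paper's blow-up/contradiction argument, and the clustered-transition case you defer to \cite{Solci} is the one the paper handles by iterated Rolle's theorem. Your final step via convergence of $L^p$ norms is a harmless variant of the paper's truncation argument.
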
 
 
 \begin{proof} As already observed, it suffices to show that, for fixed $\eta$, $u_\e$ has a finite number of transitions between $1-\eta$ and $-1+\eta$. 
Suppose otherwise, then for all $\e$ small enough there exists an interval $J_\e=(a_\e, b_\e)$ containing a transition and such that $F_\e(u_\e, J_\e)=o(1)$ as $\e\to 0$, and $b_\e-a_\e> k L\e$. 
If $J_\e$ contains up to $k$ intervals such that $||u_\e|-1|<\eta$ then we can apply Lemma \ref{lemmac}.
We deduce that the functions $v_\e(t)= u_\e(a_\e +t(b_\e-a_\e))$ tend to a constant in $H^{s+k}(0,1)$, since they have vanishing $H^{k+s}$ seminorm and $v^{(\ell)}_\e$ is equibounded at a point. This gives a contradiction with the boundary conditions transition.

If $J_\e$ contains more such intervals then there exists an interval of length of order $\e$ with a large number of transitions. Applying Rolle's theorem iteratively, we have the existence of points where $u^{(\ell)}(t)=0$ for all $\ell\in\{1,\ldots,k-1\}$, and we can repeat the argument by contradiction.

Finally, if $W(z)\ge c_1|z|^p-c_2$ then  $\int_{\{|u_\e|>2\}}|u_\e|^p dx\to 0$ and $(u_\e\vee{-2})\wedge 2$ converges to $u$ in $L^p(I)$, which proves that $u_\e\to u$  in $L^p(I)$.
\end{proof}

\begin{remark}\label{limitat}\rm
We note that in the proof of the compactness theorem instead of $-1+\eta$ and $1-\eta$ we can take any two numbers $\lambda_1,\lambda_2$ such that $[\lambda_1,\lambda_2]\cap\{-1,1\}=\emptyset$. In particular, for every $r>0$ we have that the number of transitions between $1+r$ and $1+2r$ is equibounded, and analogously for transitions between $-1-2r$ and $-1-r$.
\end{remark}

\subsection{The surface tension}
The explicit characterization of the surface tension $m_1=c_W$  (see \eqref{2}) in the Modica--Mortola result cannot be exported for other perturbations. It is convenient to characterize it in terms of an optimal-profile problem as in \cite{FT}, where it is shown that
\begin{eqnarray}\label{emme1} \nonumber
m_1&=&\inf\Big\{\int_{\mathbb R} (W(v)+|v'|^2)dt: v(\pm\infty)=\pm1\Big\}
\\
&=&\inf_{T>0} \Big\{\int_{-T}^T (W(v)+|v'|^2)dt: v(\pm T)=\pm1\Big\}.
\end{eqnarray}
Note that the second formula also gives an upper bound, suggesting the construction of recovery sequences of the form $u_\e(x)=v(\tfrac{x-\overline x}\e)$ or $u_\e(x)=-v(\tfrac{x-\overline x}\e)$ on intervals $(\overline x-\e T,\overline x+\e T)$ with $\overline x\in S(u)$ for $u\in BV(I;\{-1,1\})$.

 This characterization of the surface tension can be exported to the fractional case as in the following proposition.

\begin{proposition}[$k+s$-surface tension]\label{sut}
Let $k\in \mathbb N$ and $s\in (0,1)$ with $k+s>\frac12$. Then we have the equality
\begin{eqnarray}\label{emme}\nonumber
&&m_{k+s} =\inf\Big\{\int_{\mathbb R} W(v)dx+\int_{\mathbb R^2}\frac{|v^{(k)}(x)-v^{(k)}(y)|^2}{|x-y|^{1+2s}} dx\,dy :\\
&& \hskip4.5cm v\in H^{k+s}_{\rm loc}(\mathbb R), \lim_{x\to\pm\infty}v(x)=\pm1\Big\}\\
\label{emmetilde}\nonumber
&&\hskip1cm =\inf_{T>0} \inf\bigg\{\int_{-T}^{+T} W(v)dt +\int_{\mathbb R^2}\frac{|v^{(k)}(x)-v^{(k)}(y)|^2}{|x-y|^{1+2s}} dx\,dy :\\
&& \hskip4cm v\in H^{k+s}_{\rm loc}(\mathbb R), v(x)={\rm sign}\, x\hbox{ \rm if }|x| \ge T\Big\}.
\end{eqnarray}
\end{proposition}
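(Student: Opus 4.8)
The plan is to prove the two equalities in turn. Write $\mathcal E(v)=\int_{\mathbb R}W(v)\,dx+\iint_{\mathbb R^2}\frac{|v^{(k)}(x)-v^{(k)}(y)|^2}{|x-y|^{1+2s}}\,dx\,dy$ for the energy on the whole line, and let $\mu$ be the first infimum in the statement (profiles on $\mathbb R$ with $v(\pm\infty)=\pm1$) and $\tilde\mu$ the second (profiles forced to coincide with $\mathrm{sign}\,x$ outside a bounded interval). First I would dispose of $\mu\le\tilde\mu$, which is immediate: a profile $v$ admissible for the constrained problem at scale $T$ lies in $H^{k+s}_{\mathrm{loc}}(\mathbb R)$, has $v(\pm\infty)=\pm1$, and satisfies $W(v)\equiv0$ on $\{|x|\ge T\}$, so $\int_{-T}^{T}W(v)=\int_{\mathbb R}W(v)$ and $v$ is admissible for the unconstrained problem with $\mathcal E(v)$ equal to the value it realises in the constrained infimum; taking infima gives $\mu\le\tilde\mu$.

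For the reverse inequality I would fix $\delta>0$, take $v$ with $\mathcal E(v)\le\mu+\delta$, and truncate it into a competitor for the constrained problem losing only $o(1)$ in energy as $\delta\to0$. The starting point is the decay of $v$ and its derivatives: from $\int_{\mathbb R}W(v)<\infty$, $v(\pm\infty)=\pm1$, and the two-sided quadratic bounds $\alpha(|z|-1)^2\le W(z)\le\beta(|z|-1)^2$ near $\pm1$ coming from hypothesis (ii), one gets $v-1\in L^2$ near $+\infty$ and $v+1\in L^2$ near $-\infty$; combining this with finiteness of the Gagliardo seminorm of $v^{(k)}$, a (localised) fractional interpolation inequality of Gagliardo--Nirenberg type yields $v^{(\ell)}\in L^2$ near $\pm\infty$ for every $1\le\ell\le k$. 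Hence, for $T$ large, all the tail quantities $\int_{|x|>T}W(v)$, $\sum_{\ell=1}^{k}\int_{|x|>T}|v^{(\ell)}|^2$ and $\iint_{\{|x|>T\ \mathrm{or}\ |y|>T\}}\frac{|v^{(k)}(x)-v^{(k)}(y)|^2}{|x-y|^{1+2s}}$ are smaller than $\delta$; and, turning $L^2$-smallness on unit intervals into sup-smallness by estimating the oscillation of each $v^{(\ell)}$ through Cauchy--Schwarz, the full jet $(v,v',\dots,v^{(k-1)})$ is within $\delta$ of $(1,0,\dots,0)$ uniformly on $[T,T+1]$ and of $(-1,0,\dots,0)$ on $[-T-1,-T]$. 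I would then fix a smooth cut-off $\chi$ with $0\le\chi\le1$, $\chi\equiv1$ on $[-T,T]$ and $\chi\equiv0$ off $[-T-1,T+1]$, and set $\tilde v=\chi\,v+(1-\chi)\,\mathrm{sign}$. Then $\tilde v=v$ on $[-T,T]$, $\tilde v=\mathrm{sign}\,x$ for $|x|\ge T+1$, and $\tilde v\in H^{k+s}_{\mathrm{loc}}(\mathbb R)$, so $\tilde v$ is admissible for the constrained problem with parameter $T+1$.

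It then remains to verify $\mathcal E(\tilde v)\le\mathcal E(v)+o(1)$. For the potential term this is elementary: on $\{T<|x|<T+1\}$ one has $|\tilde v-\mathrm{sign}\,x|=(1-\chi)|v-\mathrm{sign}\,x|<\delta$, so $W(\tilde v)\le\beta|v-\mathrm{sign}\,x|^2$ there and $\int_{\mathbb R}W(\tilde v)\le\int_{-T}^{T}W(v)+2\beta\delta\le\mathcal E(v)+2\beta\delta$. For the non-local term I would write, on the transition annulus, $\tilde v^{(k)}=\chi\,v^{(k)}+r$, where $r$ is a finite sum of products of derivatives of $\chi$ with the functions $v^{(\ell)}$ ($1\le\ell\le k-1$) and $v\mp1$; thus $r$ is supported in $\{T<|x|<T+1\}$, is $o(1)$ in $L^\infty$, and — being also bounded in $H^1$ there — is $o(1)$ in the Gagliardo seminorm, while $\chi\,v^{(k)}=v^{(k)}-(1-\chi)v^{(k)}$ with $(1-\chi)v^{(k)}$ supported in $\{|x|>T\}$, where both $\|v^{(k)}\|_{L^2}$ and the Gagliardo seminorm of $v^{(k)}$ are small. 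Splitting $\iint_{\mathbb R^2}$ according to whether $x$ and $y$ lie inside or outside $\{|x|<T\}$ and using the elementary kernel bound $\int_{-\infty}^{T}|y-x|^{-1-2s}\,dx=\frac{(y-T)^{-2s}}{2s}$ for $y>T$ to control the cross-terms, one gets that the Gagliardo seminorm of $\tilde v^{(k)}-v^{(k)}$ is $o(1)$, and since the seminorm of $v^{(k)}$ is bounded this gives $\iint_{\mathbb R^2}\frac{|\tilde v^{(k)}(x)-\tilde v^{(k)}(y)|^2}{|x-y|^{1+2s}}\le\iint_{\mathbb R^2}\frac{|v^{(k)}(x)-v^{(k)}(y)|^2}{|x-y|^{1+2s}}+o(1)$. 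Altogether $\mathcal E(\tilde v)\le\mathcal E(v)+o(1)\le\mu+o(1)$, and letting $\delta\to0$ gives $\tilde\mu\le\mu$, hence $\mu=\tilde\mu$.

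The main obstacle is precisely this last estimate. Non-locality makes the Gagliardo seminorm sensitive to the truncation through long-range interactions, and since $v^{(k)}$ lives only in $H^s_{\mathrm{loc}}$ — hence is not even continuous when $s\le\frac12$ — one cannot simply prescribe derivative values at the truncation point and glue with a polynomial; the multiplicative cut-off is chosen exactly so as not to introduce a jump in $v^{(k)}$, but then controlling the ensuing cross-terms genuinely requires the $L^2$-tail decay of all derivatives $v^{(\ell)}$, $\ell\le k$, which is where the fractional interpolation inequality is used. This is the one-dimensional, whole-line counterpart of the localised interpolation estimates behind Lemma~\ref{lemmac}, and it is the reason the elementary transition-interval argument of the classical Modica--Mortola case no longer suffices. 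Finally, to identify the common value $\mu=\tilde\mu$ with the constant $m_{k+s}$ in \eqref{k+s-lim} one uses the one-dimensional $\Gamma$-convergence: the lower-bound (liminf) inequality, proved by blow-up at a jump point, bounds the cost from below by $\mu$, while constrained optimal profiles yield recovery sequences $u_\e(x)=\pm v((x-\bar x)/\e)$ near each $\bar x\in S(u)$ — possible thanks to the scaling identity \eqref{k+s-risc}, as already indicated in the classical case — bounding it from above by $\tilde\mu$; since $\mu=\tilde\mu$, the $\Gamma$-limit exists and $m_{k+s}=\mu=\tilde\mu$.
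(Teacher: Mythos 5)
Your proposal is correct in outline, and the easy inequality and the overall architecture (truncate an almost-minimizer of the whole-line problem into a competitor for the constrained one) coincide with the paper's; but the truncation mechanism is genuinely different. The paper first uses the compactness machinery (Remark \ref{limitat}, itself a consequence of the local interpolation Lemma \ref{lemmac} after scaling) to locate, up to a translation, a point $x_0$ beyond which the almost-minimizer is $\eta$-close to ${\rm sign}\,x$ with tail energy at most $\eta$, and then applies the cut-off Lemma \ref{culemma} with $T=x_0/\sigma$ and $L=T/3$: the cut-off is spread over an interval of \emph{diverging} length $L$, and the error is controlled by the explicit factor $C_\delta/L^{2s}$ coming from the bounds $|\varphi^{(\ell)}|\le C L^{-\ell}$, multiplied by local energies of $u$ itself, so that no smallness of the tails of the intermediate derivatives is ever invoked. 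You instead cut off over a \emph{unit} interval and push all the work into showing that every tail quantity ($\int_{|x|>T}W(v)$, $\|v^{(\ell)}\|_{L^2(|x|>T)}$ for $1\le \ell\le k$, and the Gagliardo tail of $v^{(k)}$) vanishes as $T\to\infty$, via a whole-line version of the localized fractional interpolation. Both routes work for this proposition; yours trades the ``lengthy and delicate'' case analysis behind Lemma \ref{culemma} for the interpolation step, but note that the paper's lemma is reused in the liminf inequality of the $\Gamma$-convergence theorem, where $T$ stays finite and no tail decay is available, so your fixed-width variant would not substitute for it there. Two steps in your argument deserve more care than you give them: (a) the seminorm of $g=\tilde v^{(k)}-v^{(k)}$ contains the boundary term $\int_{|y|>T}|g(y)|^2(|y|-T)^{-2s}dy$, whose weight is non-integrable at $|y|=T$ when $s\ge\frac12$, so you must take $\chi$ with all derivatives vanishing at $|x|=T$, and beyond the transition zone you genuinely need $\|v^{(k)}\|_{L^2(|y|>T)}=o(1)$ --- the $\ell=k$ case of your interpolation claim, which does not follow from $v^{(k)}\in H^s_{\rm loc}$ alone; (b) the interaction between the modified zone and the bulk where $v$ actually transitions decays only like $T^{-2s}\|v^{(k)}\|_{L^2}^2$, so $T\to\infty$ is silently doing the same job in your proof that $L\to\infty$ does in the paper's. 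Finally, your closing identification of the common value with the $\Gamma$-limit is not part of this statement: in the paper $m_{k+s}$ is \emph{defined} by the first formula, the content of the proposition being only its equality with \eqref{emmetilde}, and the $\Gamma$-convergence identification is the subsequent theorem.
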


In order to prove this result and the lower bound for the $\Gamma$-limit, we preliminarily state a technical cut-off  lemma, taken from the proof of \cite[Proposition 12]{Solci}. In the proof of the following lemma the cut-off functions $\varphi$ are constructed in such a way that the resulting $v$, defined on intervals $(-T,T)$, are constant in a `safe zone' of length $L$ at the boundary of the interval. This is necessary in order to ensure that the additional contribution obtained by extending $v$ to the whole $\mathbb R$ is small for $L$ large. 

\begin{lemma}[A cut-off lemma]\label{culemma}
Let $\eta,\sigma,T,L>0$, with $\eta<1$, $u\in H^{k+s}(-T,T)$ such that $|u-1|\le \eta$ on $[\sigma,T]$ and $|u+1|\le \eta$ on $[-T,-\sigma T]$. Then there exists $v\in H^{k+s}_{\rm loc}(\mathbb R)$ such that $v(x)={\rm sign}(x)$ if $|x|\ge T-L$ and for all $\delta>0$ there exists $C_\delta$ such that
\begin{eqnarray}\label{lemma-co}\nonumber
F^{k+s}(v,\mathbb R))&\le& (1+\delta) F^{k+s}(u,(-T,T))\\ \nonumber
&&+ \frac{C_\delta}{L^{2s}}\Big(F^{k+s}(u,(T-2L,T))
+ F^{k+s}(v,(-T,-T+2L))\\
&&+\frac1{\sigma T}+(1+\sigma^{2s} T^{2s})F^{k+s}(u,(-T,T))\Big) +o(1)
\end{eqnarray}
as $L\to+\infty$.
\end{lemma}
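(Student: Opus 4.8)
The plan is to obtain $v$ from $u$ by a single smooth cut-off near each endpoint $\pm T$, extend it by the constant value $\pm1$ beyond $(-T,T)$, and control separately the (elementary) potential term and the (delicate) Gagliardo term. First I would fix once and for all a profile $\varphi\in C^\infty(\mathbb R;[0,1])$ with $\varphi\equiv1$ on $(-\infty,0]$ and $\varphi\equiv0$ on $[1,+\infty)$, and set $\varphi_L(x)=\varphi\bigl((x-(T-2L))/L\bigr)$, so that $\varphi_L\equiv1$ for $x\le T-2L$, $\varphi_L\equiv0$ for $x\ge T-L$, and $\|\varphi_L^{(j)}\|_\infty\le C_j L^{-j}$ for every $j$. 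Using $|u-1|\le\eta$ on $[\sigma T,T]$ and $|u+1|\le\eta$ on $[-T,-\sigma T]$, I would define $v=1+\varphi_L(u-1)$ on $[\sigma T,T]$, symmetrically $v=-1+\widetilde\varphi_L(u+1)$ on $[-T,-\sigma T]$, $v=u$ on $(-\sigma T,\sigma T)$, and $v=\operatorname{sign}(x)$ on $\{|x|\ge T-L\}$; for $L$ small with respect to $T$ these definitions are consistent (the transition sits inside $(\sigma T,T-L)$, where $\varphi_L$ still equals $1$ near $\sigma T$), $v\in H^{k+s}_{\rm loc}(\mathbb R)$, and the required safe-zone condition holds.

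The potential term is immediate: since $|v\mp1|=\varphi_L|u\mp1|\le|u\mp1|\le\eta$ on the cut-off regions, assumption (ii) in the quadratic form $\alpha(|z|-1)^2\le W(z)\le\beta(|z|-1)^2$ near $\pm1$ (as in the proof of Lemma \ref{lemmac}) gives $W(v)\le\beta\varphi_L^2(u\mp1)^2\le(\beta/\alpha)W(u)$ there, while $W(v)=0$ on the safe zones; hence $\int_{\mathbb R}W(v)$ is bounded by $\int_{-T}^{T}W(u)$ plus a multiple of $F^{k+s}(u,(T-2L,T))+F^{k+s}(u,(-T,-T+2L))$, which is already dominated by the right-hand side of \eqref{lemma-co}. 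For the Gagliardo term $\iint_{\mathbb R^2}|v^{(k)}(x)-v^{(k)}(y)|^2\,|x-y|^{-1-2s}\,dx\,dy$ I would split $\mathbb R$ into the bulk $B=\{v=u\}$, the two transition zones $Z^{\pm}$ (of length $\asymp L$) and the safe zones, and estimate the corresponding pieces of the double integral.

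The pieces with both points in $(-T,T)$ are handled by inserting the Leibniz expansion $v^{(k)}=\varphi_L\,u^{(k)}+\sum_{j=1}^{k}\binom{k}{j}\varphi_L^{(j)}u^{(k-j)}$ into $|v^{(k)}(x)-v^{(k)}(y)|^2$ and separating, by repeated use of Young's inequality $|a+b|^2\le(1+\delta)|a|^2+(1+\delta^{-1})|b|^2$, the genuine increments $|u^{(k)}(x)-u^{(k)}(y)|^2$ from all the increments that carry a factor $\varphi_L,\varphi_L',\ldots$; the former, summed over $(-T,T)^2$, reconstruct up to $(1+\delta)$ the seminorm $[u^{(k)}]^2_{H^s((-T,T))}\le F^{k+s}(u,(-T,T))$, while the latter are concentrated in $Z^{\pm}$, where one estimates them by interval inequalities of the type $[f]^2_{H^s(J)}\lesssim|J|^{2-2s}\|f'\|^2_{L^2(J)}+|J|^{-2s}\|f\|^2_{L^2(J)}$ together with fractional Gagliardo--Nirenberg interpolation of the lower-order norms $\|u^{(k-j)}\|_{L^2(Z^{\pm})}$ on the intervals $[\sigma T,T]$ and $[-T,-\sigma T]$ (where $u\mp1$ is small in $L^\infty$, so $\|u\mp1\|^2_{L^2}\le\eta^2(1-\sigma)T$, and has seminorm bounded by $F^{k+s}(u,(-T,T))$), arriving at a quantity of the form $C_\delta L^{-2s}\bigl(F^{k+s}(u,(T-2L,T))+F^{k+s}(u,(-T,-T+2L))+\tfrac1{\sigma T}+(1+\sigma^{2s}T^{2s})F^{k+s}(u,(-T,T))\bigr)$, exactly as in \eqref{lemma-co}. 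Finally, for the cross terms between the non-constant region $\{|x|<T-L\}$ and the far constant region $\{|y|\ge T-L\}$, where $v^{(k)}(y)=0$, one uses the one-dimensional tail estimate $\int_{T-L}^{+\infty}(y-x)^{-1-2s}\,dy=\tfrac1{2s}(T-L-x)^{-2s}$: on the part $x\le T-2L$ this gives the uniform gain $\le\tfrac1{2s}L^{-2s}$ and leaves $\int_B|u^{(k)}|^2$, again handled by interpolation on $(-T,T)$, while on $Z^{\pm}$ the contribution has already been absorbed above. Collecting the pieces and letting $L\to+\infty$, so that the purely transition-zone self-interactions become $o(1)$ in the appropriate sense, yields \eqref{lemma-co}.

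I expect the main obstacle to be exactly this interpolation bookkeeping: one needs fractional Gagliardo--Nirenberg inequalities with explicit, scale-tracked constants on intervals of length $\asymp T$ and $\asymp L$, applied to $u\mp1$, in order to convert the remainders $L^{-2j}\|u^{(k-j)}\|^2_{L^2}$, once paired with the nonlocal kernel, into precisely the combination $C_\delta L^{-2s}\bigl(F^{k+s}(u,(T-2L,T))+F^{k+s}(u,(-T,-T+2L))+\tfrac1{\sigma T}+(1+\sigma^{2s}T^{2s})F^{k+s}(u,(-T,T))\bigr)$; keeping the $L^{-2s}$ gain while correctly exposing the $\sigma$-- and $T$--dependence, and making the Young parameter $\delta$ interact cleanly with these inequalities, is the delicate part on which the whole estimate rests.
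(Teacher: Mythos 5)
Your construction --- a smooth cut-off at scale $L$ glueing $u$ to $\operatorname{sign}(x)$ in the band $T-2L\le|x|\le T-L$, the Leibniz expansion of $v^{(k)}$, Young's inequality to isolate the $(1+\delta)$ main term, and a splitting of the Gagliardo double integral into bulk, transition-zone and constant-tail pieces handled by scaled interpolation inequalities --- is exactly the route the paper takes, and the paper's own proof is in fact less detailed than yours, consisting of the same cut-off formula followed by a deferral of the ``lengthy computations'' to Proposition 12 of Solci's paper. The one point the paper singles out, namely that near the diagonal one should use the increment bounds $|\varphi^{(\ell)}(x)-\varphi^{(\ell)}(y)|\le C L^{-\ell}|x-y|$ for $|x-y|\le 2L$ (and that these increments vanish for $|x-y|\ge 2L$, $\ell\ge1$) rather than only the sup-norm bounds $\|\varphi^{(\ell)}\|_\infty\le C_\ell L^{-\ell}$, is encoded in equivalent form in your interval inequality $[f]^2_{H^s(J)}\lesssim|J|^{2-2s}\|f'\|^2_{L^2(J)}+|J|^{-2s}\|f\|^2_{L^2(J)}$, so the two arguments match.
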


\begin{proof}
The proof relies on a cut-off argument. The function $v$ is defined by setting
$$
v(x)= u(x)\,\varphi\big(\tfrac{|x|-T+2L}L\big)+\big({\rm sign}(x)-\varphi\big(\tfrac{|x|-T+2L}L\big)\big),
$$
where $\varphi\in C^\infty(\mathbb R)$ with $0\le \varphi\le 1$ is such that $\varphi(t)=1$ if $t\le 0$, $\varphi(t)=0$ if $t>1$. 
The lemma follows after some lengthy computations, for which we refer to \cite[Proposition 12]{Solci}.  The computations are particularly delicate since the estimate of $F^{k+s}(v,(-T,T))$ must be subdivided in many double integrals, for whose estimates different arguments must be used. An important point is to note that it is convenient to use that $|\varphi^{(\ell)}(x)-\varphi^{(\ell)}(y)|\le \frac{C}{L^\ell}|x-y|$ for $|x-y|\le 2L$ for $\ell\in\{0,\ldots, k-1\}$, while $|\varphi^{(\ell)}(x)-\varphi^{(\ell)}(y)|=0$ if $|x-y|\ge 2L$ for $\ell\in\{1,\ldots, k-1\}$.
\end{proof}

\begin{proof}[Proof of Proposition \rm\ref{sut}]  Since the expression in \eqref{emme} gives a lower value than that in 
\eqref{emmetilde} as all test functions in \eqref{emmetilde} are also test functions for  \eqref{emme}, it is sufficient to prove the converse inequality. To that end, $\eta>0$ with fixed we consider a minimizer $u$ for \eqref{emme} up to $\eta$. We note that by Remark \ref{limitat} applied to $u_\e(t)=u(\frac{t}\e)$, the number of the transitions of $u$ between $-1+\eta$ and $1-\eta$ is bounded, as well as those between $1+\eta$ and $1+2\eta$ and $-1-2\eta$ and $-1-\eta$. Moreover, the total measure of the set where $||u|-1|>\eta$ is finite. Hence, up to a translation, there exists $x_0$ such that $|u(x)-{\rm sign}x|<\eta$ for $|x|>x_0$ and $F^{k+s}(x_0,+\infty)+F^{k+s}(-\infty,-x_0)\le \eta$.
We fix $\sigma>0$ and define $T=x_0/\sigma$. We can apply Lemma \ref{culemma} with $L=T/3$, obtaining 
\begin{eqnarray*}\label{lemma-co}\nonumber
F^{k+s}(v,\mathbb R))&\le& (1+\delta) F^{k+s}(u,(-T,T))\\ \nonumber
&&+ \sigma^{2s}\frac{C_\delta}{x_0^{2s}}\Big(1+\frac1{x_0}+(1+x_0^{2s})F^{k+s}(u,(-T,T))\Big) +o(1),
\end{eqnarray*}
as $\sigma\to 0$, and the claim follows by the arbitrariness of $\sigma$ and $\delta$.
\end{proof}

\subsection{The $\Gamma$-limit}
By the compactness theorem above, the functionals $F^{k+s}_\e$ are equi-coercive with respect to the convergence in measure. Hence, the corresponding  $\Gamma$-limit result can be stated only on $BV(I;\{-1,1\})$.

\begin{theorem}[sharp-interface limit] Let $k\in \mathbb N$ and $s\in (0,1)$ with $k+s>\frac12$; then
we have 
 \begin{equation}\label{k+s-lim-1}
F^{k+s}(u)= m_{k+s} \#(S(u)).
\end{equation}
for all $u\in BV(I;\{-1,1\})$.
\end{theorem}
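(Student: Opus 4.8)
The plan is the usual two–sided estimate for a $\Gamma$-limit, the scaling identity \eqref{k+s-risc} and the optimal-profile characterization of Proposition \ref{sut} being the main tools, with the non-local bookkeeping supplied by the compactness Theorem \ref{compth} and the two Lemmas \ref{lemmac} and \ref{culemma}.

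\emph{Upper bound.} Given $u\in BV(I;\{-1,1\})$ with $S(u)=\{x_1,\dots,x_N\}$ and $\delta>0$, I would first use the second formula in Proposition \ref{sut} together with the cut-off Lemma \ref{culemma} to produce $T>0$, $L>0$ large, and a profile $v\in H^{k+s}_{\rm loc}(\mathbb R)$ with $v(x)={\rm sign}\,x$ for $|x|\ge T-L$ (so $v^{(\ell)}\equiv0$ there for $\ell\ge1$) and $F^{k+s}(v,\mathbb R)\le m_{k+s}+\delta$. For $\e$ small the intervals $I^\e_i=(x_i-\e T,x_i+\e T)$ are pairwise disjoint and compactly contained in $I$; set $u_\e(x)=\pm v((x-x_i)/\e)$ on $I^\e_i$, with sign matching the jump of $u$ at $x_i$, and $u_\e=u$ elsewhere. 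The function $u_\e$ lies in $H^{k+s}(I)$ because $v$ is locally constant near $\pm T$. By \eqref{k+s-risc} the contribution of the pair $I^\e_i\times I^\e_i$ plus $\tfrac1\e\int_{I^\e_i}W(u_\e)$ is bounded by $F^{k+s}(v,\mathbb R)$, and $W(u_\e)=0$ outside $\bigcup_iI^\e_i$; the interaction between two distinct blocks is $O(\e^{1+2s})$, since they stay at distance bounded below while $\int_{I^\e_i}|u^{(k)}_\e|^2=O(\e^{1-2k})$; and the interaction of a block with $\{u_\e=\pm1\}$ is $O(L^{-2s})$, because $u^{(k)}_\e$ vanishes within distance $\e L$ of the interface and $\int_{|x-y|\ge\e L}|x-y|^{-1-2s}dy\le C(\e L)^{-2s}$, a bound that after multiplication by $\e^{2(k+s)-1}$ is independent of $\e$ and is made $\le\delta$ by choosing $L$ large (these are exactly the estimates behind \eqref{lemma-co}). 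Summing, $\limsup_\e F^{k+s}_\e(u_\e)\le N(m_{k+s}+\delta)+N\delta$, and $\delta\to0$ gives the $\Gamma$-$\limsup$ inequality.

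\emph{Lower bound.} Let $u_\e\to u$ in measure, $u\in BV(I;\{-1,1\})$, and assume $\lim_\e F^{k+s}_\e(u_\e)=:S<+\infty$. For $\eta\in(0,\overline\eta)$ denote by $m_{k+s}(\eta)$ the infimum in \eqref{emme} with the boundary condition relaxed to: the competitor has a transition between $-1+\eta$ and $1-\eta$; the same argument as in Proposition \ref{sut} gives $m_{k+s}(\eta)\to m_{k+s}$ as $\eta\to0$. By Theorem \ref{compth} and Remark \ref{limitat} the number $N^\eta_\e$ of transitions of $u_\e$ between $-1+\eta$ and $1-\eta$ is equibounded; along a subsequence it is a constant $N^\eta$, and since every jump of $u$ forces at least one such transition, $N^\eta\ge N=\#(S(u))$. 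I would isolate around the $j$-th transition a maximal interval $J^j_\e$ carrying that transition together with the two adjoining stretches where $||u_\e|-1|\le\eta$, chosen so that the $J^j_\e$ are pairwise disjoint; then, since the double integral is superadditive on disjoint intervals and the potential term is additive, $F^{k+s}_\e(u_\e)\ge\sum_jF^{k+s}_\e(u_\e,J^j_\e)$. For fixed $j$, rescale $v_\e(t)=u_\e(c^j_\e+\e t)$ around the transition point $c^j_\e$: by \eqref{k+s-risc} one gets a family on intervals $R^j_\e\uparrow\mathbb R$ with $\sup_\e F^{k+s}(v_\e,R^j_\e)<+\infty$. The interpolation Lemma \ref{lemmac} pins $v^{(\ell)}_\e$, $\ell\le k-1$, at a point of each adjoining flat stretch; together with the bound on the Gagliardo seminorm of $v^{(k)}_\e$ and the attendant interpolation estimates this gives precompactness of $v_\e$ in $C^{k-1}_{\rm loc}(\mathbb R)$, so $v_\e\to v$ with $v\in H^{k+s}_{\rm loc}(\mathbb R)$, and, reading the flat stretches through Remark \ref{limitat} and truncating the tails by Lemma \ref{culemma}, $v$ is admissible for $m_{k+s}(\eta)$. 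Weak lower semicontinuity of the Gagliardo seminorm and Fatou's lemma for $\int W$ then yield $\liminf_\e F^{k+s}_\e(u_\e,J^j_\e)\ge F^{k+s}(v,\mathbb R)\ge m_{k+s}(\eta)$. Summing, $\liminf_\e F^{k+s}_\e(u_\e)\ge N^\eta m_{k+s}(\eta)\ge N\,m_{k+s}(\eta)$, and $\eta\to0$ gives the $\Gamma$-$\liminf$ inequality.

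\emph{Main obstacle.} The hard part is the lower bound. Contrary to the classical Modica--Mortola case, the perturbation cannot be localized on a single transition interval — an affine function has vanishing Gagliardo seminorm — so the blow-up must at once (i) discard only positive non-local mass when passing to the disjoint $J^j_\e$, which superadditivity guarantees, (ii) gain enough compactness for the rescaled profiles to pass to the limit in the non-local term, which is precisely what the local interpolation Lemma \ref{lemmac} buys by controlling the lower-order derivatives at a point of the flat stretches, and (iii) recover the exact $\pm1$ behaviour at $\pm\infty$ needed to compare with the constrained minimum $m_{k+s}$, handled by the cut-off Lemma \ref{culemma} at cost $O(L^{-2s})$ together with the continuity $m_{k+s}(\eta)\to m_{k+s}$. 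The cross-interaction estimates in the upper bound are by contrast quantitatively harmless but computationally heavy; they are the lengthy computations of \cite{Solci} encapsulated in \eqref{lemma-co}.
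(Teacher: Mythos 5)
Your upper bound is essentially the paper's: scaled near-optimal profiles with compact transition glued at the jump points, with the cross-interactions between distinct blocks estimated as $O(\e^{1+2s})$ because $u_\e^{(k)}$ vanishes outside the blocks and distinct blocks stay at distance bounded below; this matches the paper's argument (which, for the block-to-constant interactions, simply absorbs them into $F^{k+s}(v,\mathbb R)$ by taking $v$ from the second formula in Proposition \ref{sut} rather than re-estimating them as $O(L^{-2s})$; also note that for $k=0$ the constant regions themselves interact and a separate remark is needed). Your lower bound, however, takes a genuinely different route. The paper does \emph{not} pass to a limit profile: for each jump point and each fixed $\e$ it applies the cut-off Lemma \ref{culemma} to the blown-up function $\widetilde u_\e$ on $(-T,T)$ to manufacture an admissible competitor $v_\e$ for the \emph{unrelaxed} problem $m_{k+s}$, giving directly $m_{k+s}\le(1+\delta)F^{k+s}_\e(u_\e,(t_0-\e T,t_0+\e T))+C_\delta T^{-2s}$, after which one sums over disjoint blow-up intervals and sends $T\to\infty$, $\delta\to0$. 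You instead extract a limit profile by compactness, invoke lower semicontinuity of the Gagliardo seminorm and Fatou, and compare with a relaxed surface tension $m_{k+s}(\eta)$. Both strategies are sound, but yours carries three extra obligations that you assert rather than prove: (i) the precompactness of the rescaled profiles in $C^{k-1}_{\rm loc}$ (one must upgrade ``bounded Gagliardo seminorm plus derivatives pinned at one point of each flat stretch'' to local $H^{k+s}$ bounds via interpolation); (ii) the fact that the transition does not escape to infinity under blow-up, so that the limit $v$ is non-constant and genuinely admissible for $m_{k+s}(\eta)$ (this needs the quantitative $O(\e)$ transition-length bound hidden in the proof of Theorem \ref{compth}); and (iii) the continuity $m_{k+s}(\eta)\to m_{k+s}$, which is morally Proposition \ref{sut} again but must handle competitors that merely reach $1-\eta$ without settling there. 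The paper's construction-at-each-$\e$ sidesteps all three at the price of a single application of the cut-off lemma, which is why it is the more economical argument; your version is a legitimate and more ``standard'' blow-up proof, but you should either discharge (i)--(iii) explicitly or switch to the direct comparison.
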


\begin{proof} Let $u_\e\to u$ in measure with $F^{k+s}_\e(u_\e)\le S<+\infty$. Let $t_0\in S(u)$, and define 
$\widetilde u_\e(t) = u_\e(t_0+\e t)$. We have $F^{k+s}(\widetilde u_\e, (-T,T))\le S$ for all $T>0$, and argue similarly to the proof of  Proposition \rm\ref{sut}. Given $\eta>0$, we fix $x_0$ such that, up to a change of sign, $|\widetilde u_\e(x)-{\rm sign}(x)|\le \eta$ for $|x|\in[x_0, T]$. We can construct $v_\e\in H^{k+s}_{\rm loc}(\mathbb R)$ such that $v_\e(x)={\rm sign}(x)$ if $|x|\ge \frac23 T$ and for all $\delta>0$ there exists $C_\delta$ such that 
\begin{eqnarray*}\nonumber
F^{k+s}(v_\e,\mathbb R))\le (1+\delta) F^{k+s}_\e(u_\e,(t_0-\e T,t_0+\e T)) +\frac{C_\delta}{T^{2s}}.
\end{eqnarray*}
Since $v_\e$ is a test function for $m_{k+s}$, summing up for $t_0\in S(u)$ and taking into account that all $(t_0-\e T,t_0+\e T)$ are disjoint for $\e$ small enough, we obtain that
$$
m_{k+s}\,\#(S(u))\le  (1+\delta) \liminf_{\e\to 0} F^{k+s}_\e(u_\e) + \frac{C_\delta}{T^{2s}}.
$$
By the arbitrariness of $T$ and $\delta$ we obtain the liminf inequality. 

\smallskip
The construction of an approximate recovery sequence for a function $u\in BV(I;\{-1,1\})$ follows a usual pattern:
given $\eta>0$ we find $T>0$ and $v\in H^{k+s}_{\rm loc}(\mathbb R)$ such that $v(x)={\rm sign}\, x$ if $|x| \ge T$ and 
$$
F^{k+s}(v,\mathbb R)\le m_{k+s}+\eta.
$$
Let $S(u)=\{t_1,\ldots, t_N\}$ and define 
$$
u_\e(t)=\begin{cases}
v\Big(\frac{u(t_j^+)-u(t_j^-)}2\cdot\frac{t-t_j}\e\Big) & \hbox{ if } |t-t_j|\le \e\\
u(t) & \hbox{ otherwise.}
\end{cases}
$$
We set $x_0=\inf I$, $x_j=(t_{j+1}+t_j)/2$ for $j\in\{1,\ldots, N-1\}$ and $x_N=\sup I$.
In order to check that $u_\e$ is a recovery sequence, we need to show that the amount of energy due to
the interactions between $x$ and $y$ belonging to intervals $(x_{j_1-1}, x_{j_1})$ and $(x_{j_2-1}, x_{j_2})$ with $j_1\neq j_2$. If $k\ge 1$ we note that actually the interaction is only between the intervals $(t_{j_1}-\e T,t_{j_1}+\e T)$ and $(t_{j_2}-\e T,t_{j_2}+\e T)$, so that $|x-y| \sim |t_{j_2}-t_{j_1}|$ independently of $\e$, which gives that these contributions are asymptotically negligible. In the case $k=1$ one additionally has to note that for ${j_2}={j_1}+1$ there is no interaction between pair of points in $(t_{j_1}+\e T, t_{j_2}-\e T)$. This implies that
$$
F_\e^{k+s}(u_\e,I)\le (m_{k+s}+\eta)\#(S(u)) +o(1)
$$
as $\e\to 0$, which proves the claim by the arbitrariness of $\eta$.
\end{proof}

\subsection{Analysis at critical exponents}
The results in the previous section show that the sharp-interface limit is described by the surface tension, which then defines a function $t\mapsto m_t:=m_{k_t+s_t}$ for $t\in (\frac12,+\infty)\setminus\mathbb N$, where $k_t= \lfloor t\rfloor$
and $s_t= t-\lfloor t\rfloor$ are the integer part and the fractional part of $t$, respectively. The behaviour of $m_t$ at $\frac12$ and at $\mathbb N$ is singular, in the sense that the limit of  $m_t$ is $+\infty$ at these points. In this section we briefly recall how this singular behaviour can be `corrected', reporting results by Picerni \cite{Picerni} and Solci \cite{}.

\subsubsection{Analysis close the exponent $1/2$}
If $k=0$ and $s=1/2$, we have $2(k+s)-1=0$, so that the functional in \eqref{m+k} 
$$
\frac1\e\int_I W(u)dx+ \int_{I\times I}\frac{|u(x)-u(y)|^2}{|x-y|^{2}}dx\,dy
$$
presents a second term which is non-singular. Note that the second term is finite on $H^{1/2}(I)$, while the first term forces $u$ to take the values $-1$ and $1$, which is an incompatible requirement.

The second term in the functional above is scale invariant. 
As is often the case for scale-invariant energies, the correct scaling turns out to be logarithmic,
and we define
\begin{equation}\label{m+k1/2}
F^{1/2}_\e(u)=\frac1{\e|\log\e|}\int_I W(u)dx+ \frac1{|\log\e|}\int_{I\times I}\frac{|u(x)-u(y)|^2}{|x-y|^{2}}dx\,dy.
\end{equation}
A classical result describing the asymptotic behaviour of $F^{1/2}_\e$ is the following (see \cite{ABS,SV}, to which we refer for a proof; see also \cite{PaVi}).

\begin{theorem}
The functionals $F^{1/2}_\e$ are equi-coercive  with respect to the convergence in measure, and their
$\Gamma$-limit of $F^{1/2}_\e$ as $\e\to 0$ is the sharp-interface functional with domain $BV(I;\{-1,1\})$ 
$$
F^{1/2}(u)= m_{1/2} \#(S(u))
$$
with $m_{1/2}=8$.
\end{theorem}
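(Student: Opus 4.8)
The claimed result is the analogue of the Modica--Mortola/Solci sharp-interface limit at the critical exponent $k=0$, $s=1/2$, with the nonsingular Gagliardo term renormalized by $1/|\log\e|$ and the surface tension pinned down to the explicit value $m_{1/2}=8$. I would follow exactly the three-step template used above for $F^{k+s}_\e$: equi-coercivity in measure, a $\liminf$ inequality obtained by blow-up around a single jump point, and a $\limsup$ inequality by an explicit recovery sequence; the only genuinely new ingredient, besides the logarithmic scaling bookkeeping, is the \emph{exact} computation of the optimal constant.

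\textbf{Step 1 (equi-coercivity).} Suppose $F^{1/2}_\e(u_\e)\le S$. As in Theorem~\ref{compth}, fix $\eta\in(0,\tfrac12)$ and bound the number of transitions of $u_\e$ between $-1+\eta$ and $1-\eta$. The potential term contributes at least $\tfrac{c_\eta}{\e|\log\e|}$ times the measure of the region where $||u_\e|-1|>\eta$, while a transition across a window of width $\ell$ forces, by the classical estimate for the $H^{1/2}$-seminorm of a monotone-type profile, a Gagliardo contribution of order $\tfrac1{|\log\e|}\log\tfrac{\ell}{\e}$ (this is precisely the scale-invariant/log mechanism). Balancing, a transition costs at least a positive constant independent of $\e$, so the number of transitions is equibounded; Bolzano--Weierstrass on the transition points and the usual dichotomy ($u_\e>1-\eta$ on a fixed interval $\Rightarrow u_\e\to 1$ there) give a limit $u\in BV(I;\{-1,1\})$ and convergence in measure. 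The $L^p$ upgrade under $W(z)\ge c_1|z|^p-c_2$ is the truncation argument verbatim.

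\textbf{Step 2 (lower bound, via the optimal profile).} Introduce the renormalized optimal-profile constant
\[
\widetilde m_{1/2}=\inf\Big\{\liminf_{R\to\infty}\tfrac1{\log R}\Big(\tfrac1R\!\int_{-R}^{R}\!W(v)\,dx+\!\int_{(-R,R)^2}\!\tfrac{|v(x)-v(y)|^2}{|x-y|^2}\,dx\,dy\Big):v(\pm\infty)=\pm1\Big\},
\]
the natural logarithmic analogue of \eqref{emme}. For $t_0\in S(u)$, set $\widetilde u_\e(t)=u_\e(t_0+\e t)$; then $\tfrac1{|\log\e|}$ times the energy of $\widetilde u_\e$ on $(-R,R)$ with $R=1/\e$ reproduces, after the change of variables, the quantity in the bracket above, so each jump contributes at least $\widetilde m_{1/2}$ in the limit. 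Because the jumps of $u_\e$ concentrate at scale $\e$ around distinct points of $S(u)$, the windows $(t_0-\e R,t_0+\e R)$ are disjoint for $\e$ small, and a cut-off argument in the spirit of Lemma~\ref{culemma} (here much simpler, as no higher derivatives are present: one multiplies by a smooth plateau function on a dyadic annulus and estimates the $H^{1/2}$ error, which is $O(1/\log R)$) decouples the contributions. Summing over $S(u)$ and letting $R\to\infty$, $\eta\to0$ yields $\liminf_\e F^{1/2}_\e(u_\e)\ge \widetilde m_{1/2}\,\#(S(u))$.

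\textbf{Step 3 (upper bound and the value $8$).} For the $\limsup$ one builds, around each jump point $t_j$, a profile that interpolates between $-1$ and $1$; the glueing across the midpoints $x_j$ produces only $O(1/\log\e)$ cross-interaction error, so $F^{1/2}(u)\le \widetilde m_{1/2}\,\#(S(u))$, and it remains to show $\widetilde m_{1/2}=8$. This is where the real work lies and is the step I expect to be the main obstacle. The logarithmic renormalization kills the potential term in the limit — any profile that equals $\pm1$ outside a bounded set has $\tfrac1R\int_{-R}^R W(v)\le C$ — so $\widetilde m_{1/2}$ reduces to the sharp constant in
\[
\widetilde m_{1/2}=\inf\Big\{\liminf_{R\to\infty}\tfrac1{\log R}\int_{(-R,R)^2}\tfrac{|v(x)-v(y)|^2}{|x-y|^2}\,dx\,dy : v:\mathbb R\to[-1,1],\ v(\pm\infty)=\pm1\Big\}.
\]
The lower bound: for any such $v$, estimate $\int_{(-R,R)^2}\tfrac{|v(x)-v(y)|^2}{|x-y|^2}$ from below by splitting $(-R,R)$ into $\{v\le 0\}$ and $\{v>0\}$ and using $|v(x)-v(y)|\ge$ the jump across these sets; the worst-case configuration is $v=\mathrm{sign}$ on $[-R,R]$, for which $\int_{(-R,R)^2}\tfrac{|\mathrm{sign}\,x-\mathrm{sign}\,y|^2}{|x-y|^2}=4\int_0^R\!\!\int_{-R}^0\tfrac{dx\,dy}{(x-y)^2}=4\log\!\big(\tfrac{(2R)^2}{R\cdot R}\big)\cdot(1+o(1))$; keeping track of the constants one gets exactly $8\log R+O(1)$, hence $\widetilde m_{1/2}\ge 8$. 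The matching upper bound is obtained by taking $v_R=\mathrm{sign}$ (or a mollification on a unit-length window, which changes the integral only by $O(1)$), giving the same asymptotics. One should double-check the constant against the statements in \cite{ABS,SV}; the combinatorial factor $4$ from the two symmetric quadrants times the logarithmic divergence $2\log(2R)-2\log\varepsilon$ truncated at the outer scale $R$ is exactly what produces $8$, consistently with the general fact that for the Heaviside-type jump in the half-plane the $H^{1/2}$-seminorm diverges logarithmically with prefactor $8$ per jump.
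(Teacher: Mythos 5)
The paper itself does not prove this theorem (it refers to \cite{ABS,SV,PaVi}), so I can only compare your proposal against the standard argument. Your three-step template is the right one and your final constant is correct, but two of your key steps contain genuine errors.

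First, the equi-coercivity argument in Step~1 does not work as written. At the critical exponent the $H^{1/2}$ Gagliardo seminorm is scale invariant, so a single monotone transition from $-1+\eta$ to $1-\eta$ over a window of \emph{any} width $\ell$ contributes only $O_\eta(1)$ to the unrenormalized double integral, hence $O_\eta(1/|\log\e|)$ to $F^{1/2}_\e$ --- not ``a positive constant independent of $\e$''. Your claimed contribution $\frac{1}{|\log\e|}\log\frac{\ell}{\e}$ has the ratio upside down: nothing forces the profile to vary at scale $\e$ inside the window, so there is no inner scale $\e$ to divide by. The actual mechanism is the opposite: the potential term forces the set $\{||u_\e|-1|>\eta\}$ to have measure $O(\e|\log\e|)$, so each transition is confined to a window of width $w\lesssim\e|\log\e|$, and the cost comes from the \emph{far-field} interaction between the two phases out to the macroscopic separation $D$ between neighbouring transitions, which is of order $\log(D/w)\sim|\log\e|$. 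Making this quantitative (and handling clustering of transitions) is the delicate part of the compactness proof and is missing from your sketch.

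Second, the computation of the constant in Step~3 is wrong as displayed, even though the answer $8$ is right. For $v=\mathrm{sign}$ the integral $\int_{(-R,R)^2}|v(x)-v(y)|^2|x-y|^{-2}\,dx\,dy$ is $+\infty$: the divergence sits at the jump point (inner scale), not at the outer scale $R$, so $\mathrm{sign}$ is not admissible, let alone the ``worst case''. Moreover your evaluation $4\log\bigl((2R)^2/(R\cdot R)\bigr)=8\log 2$ is a constant, not $8\log R$. The correct bookkeeping is: for any admissible profile with $|v\mp1|<\eta$ on $\pm[M,\infty)$, the opposite-sign quadrants give
$2\,(2-2\eta)^2\int_M^R\int_M^R (u+v)^{-2}\,du\,dv = (8-O(\eta))\log R+O_M(1)$,
while the near-field contribution is $O(1)$ precisely because $v\in H^{1/2}_{\rm loc}$; this yields both bounds and shows the renormalized constant is universal (independent of the profile), which is why no optimal-profile problem is really needed here. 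With these two repairs your outline matches the known proofs.
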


Despite having a different scaling, functionals $F^{1/2}_\e$ can be related to $F^s_\e$, using a more complex scaling factor than $\frac1\e$. This factor can be computed by examining the minimum problems for $F^s_\e$ at varying $\e$ and $s>1/2$. The following result is proved in \cite{Picerni}.

\begin{theorem}[correction at $s=\frac12^+$]
The scaled functionals 
\begin{eqnarray}\nonumber&&
\hskip-1.1cm\widetilde F^s_\e(u):={\frac{2s-1}{1-\e^{2s-1}}} F^s_\e(u)\\
&&={\frac{2s-1}{\e(1-\e^{2s-1})}}\Big(\int_I W(u(x))dx +\e^{2s}\int_I\int_I \frac{|u(x)-u(y)|^2}{|x-y|^{1+2s}} dx\,dy\Big)
\end{eqnarray}
are equicoercive  with respect to the convergence in measure, and they
$\Gamma$-converge to $F^{1/2}$ for all $s=s_\e\to\frac12^+$. In particular, we have that
$$
\lim_{s\to1/2^+} (2s-1)m_s=m_{1/2}.
$$
\end{theorem}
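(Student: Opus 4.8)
The plan is to run the usual $\Gamma$-convergence scheme --- equi-coerciveness, $\Gamma$-$\liminf$, $\Gamma$-$\limsup$ --- the only genuinely new point being the value of the renormalized surface tension. The prefactor is there for an algebraic reason: with $t_\e:=(2s-1)|\log\e|$ we have $1-\e^{2s-1}=1-e^{-t_\e}$, so $\frac{2s-1}{1-\e^{2s-1}}=\frac1{|\log\e|}\,\frac{t_\e}{1-e^{-t_\e}}$ always lies between $\frac1{|\log\e|}$ and $2s-1$, it tends to $0$ however $s=s_\e\to\frac12^+$, and it interpolates between the logarithmic scaling of $F^{1/2}_\e$ (when $t_\e\to0$) and the scaling $(2s-1)F^s_\e$ (when $t_\e\to+\infty$). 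For equi-coerciveness I would not compare directly with $F^{1/2}_\e$ --- the comparison of the two fractional seminorms degenerates as $t_\e\to+\infty$ --- but rather re-run the transition-interval argument of Theorem \ref{compth} in the case $k=0$: on a transition interval of width $\rho$ the potential term of $F^s_\e$ is at least $c\rho/\e$ while its Gagliardo term contains the ``plateau vs.\ opposite plateau'' interaction, which is $\gtrsim\frac{c'\e^{2s-1}}{2s-1}\rho^{1-2s}$; optimizing in $\rho$ (the optimum is $\rho\sim\e$) shows that each transition carries $F^s_\e$-energy $\ge c''/(2s-1)$, hence $\widetilde F^s_\e$-energy $\ge c''/(1-\e^{2s-1})\ge c''>0$, so a family with bounded $\widetilde F^s_\e$ has an equi-bounded number of transitions and one concludes exactly as in Theorem \ref{compth}.

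The core is the analysis of the truncated optimal-profile value $m_s^{(R)}:=\inf\{F^s(v,(-R,R)):v\in H^s(-R,R),\ v={\rm sign}\hbox{ near }\pm R\}$, which rests on two asymptotic facts. First, $\lim_{s\to\frac12^+}(2s-1)m_s=m_{1/2}$: using the characterization of Proposition \ref{sut} with $k=0$, an explicit transition of fixed width $\delta$ gives the upper bound (its potential and near-diagonal energies are $O(1)$, while its far field --- the interaction of each plateau with the opposite one, in both orderings --- equals $\frac{8}{2s(2s-1)}(2\delta)^{1-2s}(1+o(1))$), and the matching lower bound follows because any competitor with $(2s-1)\cdot(\hbox{energy})$ bounded must perform an essentially $O(1)$-wide transition --- otherwise the potential blows up after multiplication by $2s-1$, recall $W(z)\sim(|z|\mp1)^2$ near $\pm1$ --- for which the same far-field estimate holds from below. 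Second, a tail estimate, obtained by adding, resp.\ removing via Lemma \ref{culemma} (with $k=0$ and a large safe zone) the interaction beyond scale $R$: $m_s-m_s^{(R)}=\frac{8}{2s(2s-1)}(2R)^{1-2s}(1+o(1))$ as $R\to+\infty$. Taking $R=R_\e=\Theta(1/\e)$ and combining, $(2s-1)m_s^{(R_\e)}=m_{1/2}(1-\e^{2s-1})+o(1)$, whence the decisive identity
$$\frac{2s-1}{1-\e^{2s-1}}\,m_{s_\e}^{(R_\e)}\ \longrightarrow\ m_{1/2}\qquad(\e\to0).$$
Making the various $o(1)$'s uniform in $s=s_\e\to\frac12$ jointly with $\e\to0$ --- above all when $\e^{2s-1}\to1$, where the denominator $1-\e^{2s-1}$ also vanishes --- is the step I expect to be the main obstacle.

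Granted this, the $\Gamma$-$\liminf$ follows the proof of the sharp-interface theorem. Given $u_\e\to u$ in measure with $\widetilde F^s_\e(u_\e)\le S$, pick pairwise disjoint intervals $I_{t_0}\subset I$, $t_0\in S(u)$, of a fixed length and each containing only the jump $t_0$; rescaling $\widetilde u_\e(\tau)=u_\e(t_0+\e\tau)$, the function $\widetilde u_\e$ is, up to sign, close to $\mp1$ near the two ends of the size-$\Theta(1/\e)$ window $I_{t_0}/\e$ (Remark \ref{limitat}), so a cut-off with a large but $o(1/\e)$ safe zone (Lemma \ref{culemma} with $k=0$) turns it into a test function for $m_{s_\e}^{(R_\e)}$, $R_\e=\Theta(1/\e)$, at an extra cost that is $o(1)$ after multiplication by the bounded prefactor. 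Dropping the non-negative interactions between different $I_{t_0}$'s then gives $\widetilde F^s_\e(u_\e)\ge\sum_{t_0\in S(u)}\frac{2s-1}{1-\e^{2s-1}}m_{s_\e}^{(R_\e)}+o(1)\to m_{1/2}\#(S(u))$.

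For the $\Gamma$-$\limsup$, given $u\in BV(I;\{-1,1\})$ with $S(u)=\{t_1,\dots,t_N\}$, insert near the $j$-th jump a rescaled near-minimizer of $m_{s_\e}^{(R_\e)}$ --- with $R_\e$ of order the inverse $\e$-distance to the neighbouring jumps, and sign adapted to that of the $j$-th jump --- leaving $u_\e=u$ elsewhere: the transition part has width $\e\delta\to0$, so $u_\e\to u$ in measure, while the rescaled plateaus fill the macroscopic gaps. The diagonal contributions equal $m_{s_\e}^{(R_\e)}$ up to corrections that are $o(1)$ after the prefactor, hence each tends to $m_{1/2}$; the off-diagonal contributions only involve pairs $x,y$ at mutual distance $\gtrsim1$, so they are $O(1)$ and multiplied by $\e^{2s-1}\to0$, and therefore vanish. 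Thus $\widetilde F^s_\e(u_\e)\le m_{1/2}\#(S(u))+o(1)$, which together with the $\Gamma$-$\liminf$ gives the stated $\Gamma$-convergence; testing it on a datum with a single jump (equivalently, by the first asymptotic fact above) yields $\lim_{s\to1/2^+}(2s-1)m_s=m_{1/2}$.
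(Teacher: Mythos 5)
The paper itself does not prove this theorem: it only records the statement and defers entirely to \cite{Picerni}, so there is no in-text argument to compare yours against. Taken on its own terms, your strategy is coherent and, in my view, identifies exactly the right mechanism: the prefactor $1-\e^{2s-1}$ is the truncation error of the optimal-profile problem at the natural window scale $R_\e=\Theta(1/\e)$, i.e.\ $(2s-1)\bigl(m_s-m_s^{(R_\e)}\bigr)\approx \tfrac{8}{2s}(2R_\e)^{1-2s}\approx 8\,\e^{2s-1}$, which is what makes $\frac{2s-1}{1-\e^{2s-1}}m_{s_\e}^{(R_\e)}\to m_{1/2}=8$ plausible in all regimes of $t_\e=(2s-1)|\log\e|$ (I checked the two extremes: for $t_\e\to\infty$ this reduces to $(2s-1)m_s\to 8$, and for $t_\e\to0$ it reproduces $m_s^{(R_\e)}\approx 8\log R_\e$, the logarithmic growth of the $s=\tfrac12$ profile). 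Your coerciveness argument via the plateau--plateau interaction, giving energy $\gtrsim c/(2s-1)$ per transition and hence $\gtrsim c>0$ per transition for $\widetilde F^s_\e$, is also the right adaptation of Theorem~\ref{compth} to $k=0$.

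Two points deserve flagging. First, the gap you yourself identify is the real one: in the regime $t_\e\to 0$ both $(2s-1)m_s-8$ and $1-\e^{2s-1}$ vanish, so the scheme needs $(2s-1)m_s-8=o\bigl((2s-1)|\log\e|\bigr)$, a rate you have not established and which cannot follow from the single-variable asymptotics of $m_s$ alone since $|\log\e|$ is free. The standard cure is not to pass through $m_s$ at all in that regime but to compare kernels directly: on the scales $\e\lesssim|x-y|\lesssim 1$ that carry all the energy one has $\e^{2s-1}|x-y|^{-1-2s}=|x-y|^{-2}\,(|x-y|/\e)^{1-2s}$ with $(|x-y|/\e)^{1-2s}=1+O(t_\e)$, so $\widetilde F^s_\e$ is squeezed between $(1\pm Ct_\e)$ times the logarithmically scaled $F^{1/2}_\e$, and the theorem for $s=\tfrac12$ finishes that case; your unified route then only needs uniformity for $t_\e$ bounded away from $0$. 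Second, a small imprecision: in the $\limsup$ you dismiss the off-diagonal interactions because they are ``multiplied by $\e^{2s-1}\to0$'', but $\e^{2s-1}=e^{-t_\e}\to1$ precisely in the delicate regime; the correct reason is the one you state earlier, namely that the full coefficient $\frac{(2s-1)\e^{2s-1}}{1-\e^{2s-1}}$ tends to $0$ however $s_\e\to\tfrac12^+$, which does kill those $O(1)$ terms.
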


\begin{remark}[interpolation between logarithmic and linear regimes]\rm The factor $\frac{2s-1}{1-\e^{2s-1}}$ can be seen as an interpolation between the constant
$2s-1$ (with $\e\to 0$ at $s$ fixed) and the logarithmic scaling, since for fixed $\e>0$ we have
$$
\frac{2s-1}{1-\e^{2s-1}}= \frac{2s-1}{1-e^{-(2s-1)|\log\e|}}\sim  \frac{1}{|\log\e|}
$$
as $s\to\frac12^+$. The asymptotic result can be interpreted as a $\Gamma$-expansion \cite{BT},
which shows that
$$\hskip-.2cm
\int_I W(u(x))dx +\e^{2s}\int_I\int_I \frac{|u(x)-u(y)|^2}{|x-y|^{1+2s}}dx\,dy\sim
\frac{8\e(1-\e^{2s-1})}{2s-1}\#(S(u))
$$
on $BV(I;\{-1,1\})$ as $s\to\frac12^+$ and $\e\to 0$.
\end{remark}

\subsubsection{Analysis at positive integers from the left}
A result by Bourgain, Brezis and Mironescu \cite{BBM} shows that the functionals 
$$
(1-s) \int_{I\times I}\frac{|v(x)-v(y)|^2}{|x-y|^{1+2s}}dx\,dy
$$
$\Gamma$-converge to $\int_I |v'|^2dx$ as $s\to 1^-$. In particular if we do not multiply by the vanishing term $1-s$ the $\Gamma$-limit results finite only on constant functions, from which we obtain that $m_s$ tends to $+\infty$ as $s\to 1^-$, showing a discontinuity with the Modica result. This discontinuity can be eliminated once we use the scaling by $1-s$, for which it can be shown (see \cite{Solci}) that the limit of the {\em scaled surface tensions} 
\begin{eqnarray}\label{emmetil1}\nonumber
&&\widetilde m_{s} =\inf\Big\{\int_{\mathbb R} W(v)dx+(1-s)\int_{\mathbb R^2}\frac{|v(x)-v(y)|^2}{|x-y|^{1+2s}} dx\,dy :\\
&& \hskip4.5cm v\in H^{s}_{\rm loc}(\mathbb R), \lim_{x\to\pm\infty}v(x)=\pm1\Big\}
\end{eqnarray}
converge to $m_1$ as in \eqref{emme1} as $s\to 1^-$. In order to extend such a correction to all positive integers, we have to first make sure that a phase-transition limit exists also for (local) integer derivatives of order $k>1$. This has been proved by Fonseca and Mantegazza \cite{FM} for second derivatives and only recently by Brusca, Donati and Solci \cite{BDS}.

\begin{theorem}[phase transitions with integer-order perturbations] Let $W$ satisfy the same hypotheses as in the results already stated, and let $k\ge 1$ be a positive integer. Then the functionals
 \begin{equation}\label{k}
F^{k}_\e(u)=\frac1\e\int_I W(u)dx+ \e^{2k-1} \int_{I}{|u^{(k)}(x)|^2}dx,
\end{equation}
with domain $H^k(I)$ are equicorcive in measure, and their $\Gamma$-limit with respect both to the convergence on measure and to the $L^2$-convergence is the sharp-interface functional $m_k\#(S(u))$ with domain $BV(I;\{-1,1\})$, where
\begin{eqnarray}\label{emmek}\nonumber
&&m_{k} =\inf\Big\{\int_{\mathbb R} W(v)dx+\int_{\mathbb R}|v^{(k)}|^2dx:v\in H^{k}_{\rm loc}(\mathbb R), \lim_{x\to\pm\infty}v(x)=\pm1\Big\}\\
\label{emmetildek}\nonumber
&&\hskip1cm =\inf_{T>0} \inf\bigg\{\int_{-T}^{+T} W(v)dt +\int_{-T}^T{|v^{(k)}(x)|^2}dx\ :
v\in H^{k}(-T,T), \\
&&\hskip1.2cm v(\pm T)=\pm1 \hbox{ \rm and } v^{(\ell)}(x)=0 \hbox{ \rm if }|x|= T, \ell\in\{1,\ldots,k-1\}\Big\}.
\end{eqnarray}
\end{theorem}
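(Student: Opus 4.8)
The plan is to replay, step by step, the four-part argument already carried out above for the fractional functionals $F^{k+s}_\e$, now at the ``formal value'' $s=0$: since $k\ge1$ we have $k+s=k>\tfrac12$, so all the thresholds used before are met, and the only genuinely new point is to check that the local interpolation lemma and the cut-off lemma survive the passage to integer order. First I would prove equicoerciveness exactly as in Theorem~\ref{compth}: for fixed $\eta$ it suffices to bound the number of transitions of $u_\e$ between $-1+\eta$ and $1-\eta$. The integer analogue of Lemma~\ref{lemmac} --- if $F^k_\e(u_\e)\le S$ and $\big|\,|u_\e|-1\,\big|\le\eta$ on an interval $J_\e$ with $|J_\e|\ge\e L$, then some $t\in J_\e$ satisfies $|u_\e^{(\ell)}(t)|\le\e^{-\ell}\eta$ for $\ell\in\{1,\dots,k-1\}$ --- follows from the classical Gagliardo--Nirenberg interpolation inequalities applied to $u_\e\mp1$, together with the quadratic control $\alpha(|z|-1)^2\le W(z)$ valid near $\pm1$ by assumption (ii); it is in fact simpler than the fractional case, all seminorms being local. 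Granted this, if the number of transitions were unbounded there would be an interval $J_\e=(a_\e,b_\e)$ with $b_\e-a_\e>kL\e$ containing a transition and carrying energy $o(1)$; rescaling $v_\e(t)=u_\e(a_\e+t(b_\e-a_\e))$ gives functions with vanishing $k$-th Dirichlet integral and derivatives up to order $k-1$ equibounded at a point, hence converging in $H^k(0,1)$ to a polynomial of degree $\le k-1$ with zero $k$-th derivative, hence a constant --- contradicting the transition. If many transitions cluster in an interval of length $O(\e)$, iterating Rolle's theorem produces points where all derivatives up to order $k-1$ vanish, and one repeats the contradiction on a shorter interval. Compactness follows; $L^2$-convergence of $u_\e$ also holds, because the recovery sequences built below are uniformly bounded and agree with $u$ outside a set of measure $O(\e)$, while the $\liminf$ inequality in $L^2$ follows a fortiori from the one in measure.

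Next I would establish the two equalities defining $m_k$ in the statement, as in Proposition~\ref{sut}. The inequality ``$\le$'' is immediate: a competitor $v$ of the second infimum, extended by $\pm1$ outside $(-T,T)$, lies in $H^k_{\rm loc}(\mathbb R)$, since the conditions $v(\pm T)=\pm1$ and $v^{(\ell)}(\pm T)=0$ for $\ell\le k-1$ give a $C^{k-1}$ junction (hence $v^{(k)}\in L^2$), and the energy is unchanged. For the converse I would fix $\eta>0$, take a near-minimizer $u$ of the first infimum, use Remark~\ref{limitat} (applied to $u(\cdot/\e)$) to bound the number of its transitions and the measure of $\{\,\big|\,|u|-1\,\big|>\eta\}$, translate so that $|u(x)-{\rm sign}\,x|<\eta$ and the tail energy is $\le\eta$ outside $[-x_0,x_0]$, set $T=x_0/\sigma$, and apply the integer analogue of the cut-off Lemma~\ref{culemma} with $L=T/3$. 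The resulting $v=u\,\varphi\big(\tfrac{|x|-T+2L}{L}\big)+({\rm sign}\,x)\big(1-\varphi\big(\tfrac{|x|-T+2L}{L}\big)\big)$ is a competitor for the second infimum, and the extra cost is controlled by the $H^k$-seminorm of a function varying on scale $L$ times $\eta^2$, a quantity of order $L^{1-2k}\to0$ as $L\to\infty$ --- and this is exactly where $k\ge1$ is used. Letting $\sigma,\delta,\eta\to0$ gives the equality.

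For the $\Gamma$-limit I would follow the proof of the sharp-interface limit above; analogously to~\eqref{k+s-risc} one has $F^k_\e(u,J)=F^k(v,\tfrac1\e J)$ with $v(t)=u(\e t)$, writing $F^k$ for $F^k_1$. For the $\liminf$ inequality: given $u_\e\to u$ in measure with $F^k_\e(u_\e)\le S$ and $t_0\in S(u)$, the rescaling $\widetilde u_\e(t)=u_\e(t_0+\e t)$ satisfies $F^k(\widetilde u_\e,(-T,T))\le S$, and the cut-off lemma turns it into a test function $v_\e$ for the second infimum defining $m_k$ with $F^k(v_\e,\mathbb R)\le(1+\delta)\,F^k_\e(u_\e,(t_0-\e T,t_0+\e T))+C_\delta\,\rho(T)$ for some $\rho(T)\to0$ as $T\to\infty$; summing over the finitely many points of $S(u)$, whose $\e$-neighbourhoods are disjoint for $\e$ small, and letting $T\to\infty$ and then $\delta\to0$, gives $m_k\,\#(S(u))\le\liminf_\e F^k_\e(u_\e)$. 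For the $\limsup$ inequality: given $u\in BV(I;\{-1,1\})$ with $S(u)=\{t_1,\dots,t_N\}$ and $\eta>0$, pick $v\in H^k(-T,T)$ near-optimal for the second infimum and set $u_\e(t)=v\big(\tfrac{u(t_j^+)-u(t_j^-)}{2}\cdot\tfrac{t-t_j}{\e}\big)$ for $|t-t_j|\le\e T$ (a reflected and rescaled copy of $v$ adapted to the jump at $t_j$) and $u_\e(t)=u(t)$ otherwise; the boundary conditions on $v$ make $u_\e\in H^k(I)$, and a change of variables gives $\tfrac1\e\int_I W(u_\e)=\#(S(u))\int_{-T}^T W(v)$ and $\e^{2k-1}\int_I|u_\e^{(k)}|^2=\#(S(u))\int_{-T}^T|v^{(k)}|^2$, whence $F^k_\e(u_\e,I)\le(m_k+\eta)\,\#(S(u))$ with no remainder. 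Here the integer case is genuinely simpler than the fractional one: the perturbation being a local integral of $u^{(k)}$, there are no cross-interactions between distinct transition intervals. Since $u_\e\to u$ in measure and in $L^2$, letting $\eta\to0$ concludes.

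The main obstacle is the first step, and within it the integer local interpolation lemma: for $k\ge2$ the perturbation $\int|u^{(k)}|^2$ is blind to polynomials of degree $<k$, so a transition interval may a priori carry no perturbation energy at all, and it is precisely this lemma that pins the low-order derivatives of $u_\e$ where $|u_\e|\approx1$ and thereby supplies the effective boundary data ruling out a ``cheap'' transition after rescaling. A close second is the cut-off estimate of the second step: it splits the $H^k$ energy of $v$ into many pieces treated by different arguments, and one must verify that the $L\to\infty$ corrections vanish --- once more the content of $k>\tfrac12$.
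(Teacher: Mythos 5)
Your proposal is correct and follows essentially the same route as the paper, whose proof of this theorem consists precisely in observing that the line of argument developed for $F^{k+s}_\e$ (compactness via the local interpolation lemma, the cut-off lemma for the surface tension and the liminf, and the rescaled optimal-profile construction for the limsup) carries over to integer order with simplifications in the cut-off computations. You have simply made that observation explicit, correctly identifying both where $k\ge 1$ enters (the $L^{1-2k}$ decay of the cut-off error) and where the integer case is genuinely easier (no cross-interactions between transition intervals in the recovery sequence).
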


\begin{proof} It suffices to note that the line of proof for the energies $F^{k+s}_\e$ works almost unchanged, with actually some simplifications in the computations when using a cut-off argument.
\end{proof}

\begin{remark}[enhanced boundary conditions]\rm
We note that, contrary to \eqref{emmetilde}, the second formula in \eqref{emmetildek} uses minimum problems on the bounded intervals $(-T,T)$ with the homogeneous boundary conditions on the derivatives. Contrary to the case $k=1$, this has to be achieved using local interpolation inequalities  as in Lemma \ref{lemmac} \cite{Solci,BDS}.
\end{remark}

In order to correct the behaviour of surface tensions at integer points `from the left' we define the Bourgain-Brezis-Mironescu scaled surface tensions
\begin{eqnarray}\label{emmeks}\nonumber
&&m^{BBM}_{k+s} =\inf\Big\{\int_{\mathbb R} W(v)dx+(1-s)\int_{\mathbb R^2}\frac{|v^{(k)}(x)-v^{(k)}(y)|^2}{|x-y|^{1+2s}} dx\,dy :\\
&& \hskip4.5cm v\in H^{k+s}_{\rm loc}(\mathbb R), \lim_{x\to\pm\infty}v(x)=\pm1\Big\}.
\end{eqnarray}
Note that for $s$ fixed the surface tension describes the $\Gamma$-limit for the corresponding scales functionals.
To check this, it suffices to note that $(1-s)^{-1}W$ is a potential satisfying the same assumptions as $W$. We then have the following result.

\begin{proposition}[continuity from the left] For all $k\ge 0$ non-negative integers we have $\lim\limits_{s\to 1^-}m^{BBM}_{k-1+s}=m_k$.
\end{proposition}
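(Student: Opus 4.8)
The plan is to prove separately the two inequalities $\limsup_{s\to1^-}m^{BBM}_{k-1+s}\le m_k$ and $\liminf_{s\to1^-}m^{BBM}_{k-1+s}\ge m_k$; together they give the claim. Fix $k\ge1$ (the case $k=0$, which needs a convention for $m^{BBM}_{-1+s}$, is left aside). Denote by $[w]_s^2:=\int_{\mathbb R^2}\frac{|w(x)-w(y)|^2}{|x-y|^{1+2s}}\,dx\,dy$ the squared Gagliardo seminorm, and write $\widetilde G_s(v):=\int_{\mathbb R}W(v)\,dx+(1-s)[v^{(k-1)}]_s^2$ and $G_1(v):=\int_{\mathbb R}W(v)\,dx+\int_{\mathbb R}|v^{(k)}|^2\,dx$, so that $m^{BBM}_{k-1+s}=\inf\widetilde G_s$ and $m_k=\inf G_1$ over the admissible profiles $v$ with $\lim_{x\to\pm\infty}v(x)=\pm1$. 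For $k=1$ this is the result of \cite{Solci} quoted above, and the argument is its extension to general $k$.

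\emph{Upper bound.} Fix $\eta>0$ and, using the optimal-profile characterisation of $m_k$ on a bounded interval, choose $T>0$ and $v\in H^k(-T,T)$ with $v(\pm T)=\pm1$, $v^{(\ell)}(\pm T)=0$ for $\ell\in\{1,\dots,k-1\}$, and $\int_{-T}^T W(v)+\int_{-T}^T|v^{(k)}|^2\le m_k+\eta$. Extending $v$ by $\pm1$ outside $(-T,T)$ produces an admissible competitor for $m^{BBM}_{k-1+s}$ (for $s$ near $1$) with $v^{(k)}\in L^2(\mathbb R)$ and $v^{(k-1)}$ constant outside a compact interval. By the pointwise Bourgain--Brezis--Mironescu convergence \cite{BBM} applied to $v^{(k-1)}$ one has $(1-s)[v^{(k-1)}]_s^2\to\int_{\mathbb R}|v^{(k)}|^2$ as $s\to1^-$, while the potential term is independent of $s$; hence $\limsup_{s\to1^-}m^{BBM}_{k-1+s}\le\lim_{s\to1^-}\widetilde G_s(v)\le m_k+\eta$, and $\eta>0$ is arbitrary.

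\emph{Lower bound.} Assume, for contradiction, $\liminf_{s\to1^-}m^{BBM}_{k-1+s}<m_k$ and pick $s_j\to1^-$ with near-minimisers $v_j$ satisfying $\widetilde G_{s_j}(v_j)\le m_k-\rho$ for some $\rho>0$ and all large $j$. Since $(1-s)^{-1}W$ satisfies the same hypotheses (i)--(iii) as $W$, the compactness analysis of Theorem \ref{compth} and Remark \ref{limitat} transfers to the functionals $\widetilde G_{s_j}$, now with $s_j\to1^-$ in the role of $\e\to0$: the number and length of the transitions of $v_j$ between $-1+\eta$ and $1-\eta$, their position after a translation, and the excursions of $v_j$ to large values are all controlled uniformly in $j$, so that there is $R$ independent of $j$ with $|v_j-{\rm sign}\,x|\le\eta$ for $R\le|x|\le T$, for any prescribed $T$. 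Applying the scaled version of the cut-off Lemma \ref{culemma} (carrying the weight $1-s_j$ along, as in the proof of the case $k=1$ in \cite{Solci}) with $\sigma T=R$, we replace $v_j$ by $\bar v_j\in H^{k+s_j}_{\rm loc}(\mathbb R)$ equal to ${\rm sign}\,x$ outside $(-\tfrac{2}{3}T,\tfrac{2}{3}T)$, with $\widetilde G_{s_j}(\bar v_j)\le\widetilde G_{s_j}(v_j)+\tfrac{\rho}{2}$, the error being made small by taking $T$ large. The interpolation inequalities used in Lemma \ref{lemmac} and in the proof of Lemma \ref{culemma} give, once $s_j$ is close to $1$, an equibound for $\bar v_j$ in $H^{k-1+\sigma}_{\rm loc}(\mathbb R)$ for a fixed $\sigma\in(0,1)$; by Rellich's theorem, up to a subsequence $\bar v_j\to v$ in $H^{k-1}_{\rm loc}(\mathbb R)$ with $v={\rm sign}\,x$ outside $(-\tfrac{2}{3}T,\tfrac{2}{3}T)$, so $v$ is admissible for $m_k$. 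Fatou's lemma for the potential and the $\Gamma$-$\liminf$ inequality for the Bourgain--Brezis--Mironescu scaled seminorms applied to $\bar v_j^{(k-1)}\to v^{(k-1)}$ in $L^2_{\rm loc}$ then yield $m_k\le G_1(v)\le\liminf_j\widetilde G_{s_j}(\bar v_j)\le m_k-\tfrac{\rho}{2}$, a contradiction.

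The \emph{main obstacle} is the passage from the singular-perturbation regime $\e\to0$ to the Bourgain--Brezis--Mironescu regime $s\to1^-$ in the compactness step: one must prove that along $s_j\to1^-$ each transition of a near-minimiser still costs a definite amount of energy and has uniformly bounded length, and that large-value excursions are negligible, so that after centring the whole transition lies in a fixed bounded interval and the cut-off error of (the scaled) Lemma \ref{culemma} is uniformly small --- in particular one must check that, with the $(1-s)$-weight in force, the cut-off construction produces an error governed by the scaled seminorm $(1-s)[\,\cdot\,]_s^2$ rather than the unscaled one, otherwise the bound would degenerate as $s\to1$. The remaining point, upgrading the energy bound to the uniform $H^{k-1+\sigma}_{\rm loc}$ bound needed to pass to the limit inside $(1-s_j)[v_j^{(k-1)}]_{s_j}^2$, is handled, as in \cite{Solci,BDS}, by the same localised fractional interpolation inequalities, together with the fact that for $s$ near $1$ the scaled seminorm $(1-s)[\,\cdot\,]_{s,(a,b)}^2$ dominates a fixed multiple of $[\,\cdot\,]_{\sigma,(a,b)}^2$ modulo lower-order $L^2$ terms.
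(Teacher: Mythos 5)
Your proposal is correct and follows essentially the same route as the paper: the Bourgain--Brezis--Mironescu limit applied to $v^{(k-1)}$ in place of $v$, combined with the cut-off Lemma~\ref{culemma} to reduce the minimum problems for $m^{BBM}_{k-1+s}$ to a fixed bounded interval uniformly in $s$. The paper's own proof is only a two-sentence sketch of exactly these two ingredients, so your write-up is a legitimate expansion of the same argument (and your caveat about the case $k=0$ and about carrying the $(1-s)$-weight through the cut-off estimates correctly identifies where the real technical work lies).
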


\begin{proof} The proof relies on the Bourgain, Brezis and Mironescu result applied with $u^{(k-1)}$ in the place of $v$, and on the use of Lemma \ref{culemma}, which guarantees that, up to a uniformly small error, minimum problems for $m^{BBM}_{k-1+s}$ can be taken with a fixed large $T$ independent of $s$.
\end{proof}

\subsubsection{Analysis at positive integers from the right}
The result by Bourgain, Brezis and Mironescu has a counterpart as $s\to 0^+$ proved by Maz'ya and Shaposhnikova \cite{Ms}. In this case the functionals 
\begin{equation}\label{MS-s}
s\int_{I\times I}\frac{|v(x)-v(y)|^2}{|x-y|^{1+2s}}dx\,dy
\end{equation}
$\Gamma$-converge to $ 2\int_I |v|^2dx$ as $s\to 0^+$ on functions $u$ with compact support in $I$. 
This restriction on the support is not relevant for our analysis since, by the restriction $k+s>\frac12$, the result is applied to functions $v=u^{(k)}$ with $k\ge 1$, with the condition that $u$ is constant (either $-1$ or $1$) outside $I$, and hence $v$ is can be assumed with compact support in $I$. 
Again, if we do not multiply by the vanishing term $s$, the $\Gamma$-limit results finite only on constant functions, from which we obtain that $m_{k+s}$ tends to $+\infty$ as $s\to 0^+$, showing a discontinuity with the result by Brusca, Donati, and Solci. This discontinuity can be eliminated once we use the scaling by $s$ in the following way.

For $k\ge 1$ and $s\in (0,1)$, we define the Maz'ya and Shaposhnikova scaled surface tensions
\begin{eqnarray}\label{emmeks-MS}\nonumber
&&m^{MS}_{k+s} =\inf\Big\{\int_{\mathbb R} W(v)dx+\frac{s}2\int_{\mathbb R^2}\frac{|v^{(k)}(x)-v^{(k)}(y)|^2}{|x-y|^{1+2s}} dx\,dy :\\
&& \hskip4.5cm v\in H^{k+s}_{\rm loc}(\mathbb R), \lim_{x\to\pm\infty}v(x)=\pm1\Big\}.
\end{eqnarray}
The normalization factor $2$ is due to the factor appearing in the $\Gamma$-limit of \eqref{MS-s} as $s\to 0$.
Again note that for $s$ fixed the surface tension describes the $\Gamma$-limit for the corresponding scaled functionals, after remarking that $2s^{-1}W$ is a potential satisfying the same assumptions as $W$. 
We then have the following result.

\begin{proposition}[continuity from the right] For all $k\ge 1$ non-negative integers we have $\lim\limits_{s\to 0^+}m^{MS}_{k+s}=m_k$.
\end{proposition}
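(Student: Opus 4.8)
The plan is to establish the two inequalities $\limsup_{s\to0^+}m^{MS}_{k+s}\le m_k$ and $\liminf_{s\to0^+}m^{MS}_{k+s}\ge m_k$ separately, in both cases using the Maz'ya--Shaposhnikova asymptotics \cite{Ms} applied to $v^{(k)}$ in place of $v$, in the same spirit as the continuity-from-the-left statement uses the Bourgain--Brezis--Mironescu asymptotics for $v^{(k-1)}$. For the upper bound I would use the bounded-interval formula for $m_k$ in \eqref{emmetildek}: given $\eta>0$, fix $T$ and a smooth $v\in H^k(-T,T)$ with $v(\pm T)=\pm1$, $v^{(\ell)}(\pm T)=0$ for $\ell\in\{1,\dots,k-1\}$ and $\int_{-T}^{T}\big(W(v)+|v^{(k)}|^2\big)\,dx\le m_k+\eta$, and extend it by $\mathrm{sign}(x)$. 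Then $v$ is admissible in \eqref{emmeks-MS}, while $W(v)$ and $v^{(k)}$ are supported in $[-T,T]$; since $v^{(k)}$ is compactly supported and of class $H^{s_0}$ for some $s_0>0$, the Maz'ya--Shaposhnikova asymptotic formula gives $\tfrac s2\int_{\mathbb R^2}\tfrac{|v^{(k)}(x)-v^{(k)}(y)|^2}{|x-y|^{1+2s}}\,dx\,dy\to\int_{\mathbb R}|v^{(k)}|^2\,dx$ as $s\to0^+$ (the normalization $\tfrac12$ being chosen precisely so that this holds). Hence $\limsup_{s\to0^+}m^{MS}_{k+s}\le\int_{-T}^{T}\big(W(v)+|v^{(k)}|^2\big)\,dx\le m_k+\eta$, and one lets $\eta\to0$.

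For the lower bound I would take, for each small $s$, an almost optimal $v_s$ admissible in \eqref{emmeks-MS}, so that $\int_{\mathbb R}W(v_s)\le C$ and $\tfrac s2[v_s^{(k)}]_s^2\le C$ uniformly, where $[\cdot]_s$ denotes the Gagliardo seminorm. First I would normalize: the bound on $\int W(v_s)$ together with a transition-counting argument uniform in $s$ (as in Remark \ref{limitat} and the proof of Theorem \ref{compth}) shows that the number of transitions of $v_s$ between $-1+\eta$ and $1-\eta$ is bounded uniformly in $s$, so that after a translation one transition sits at the origin and they all lie in a fixed bounded set. Then, up to an energy error infinitesimal as $s\to0^+$, one replaces $v_s$ by a function equal to $\pm1$ outside a bounded interval, so that $v_s^{(k)}$ has compact support; combining the Maz'ya--Shaposhnikova lower-bound inequality $\liminf_{s\to0^+}\tfrac s2[v_s^{(k)}]_s^2\ge\int_{\mathbb R}|v^{(k)}|^2\,dx$ with Fatou's lemma for the $W$-term gives $\liminf_{s\to0^+}m^{MS}_{k+s}\ge\int_{\mathbb R}\big(W(v)+|v^{(k)}|^2\big)\,dx\ge m_k$. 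The last inequality holds because the transition pinned at the origin survives in the limit (each transition carries energy bounded below uniformly in $s$ for $s$ small, since $m_k>0$), so $v$ realizes at least one net $-1\to+1$ transition and is admissible in \eqref{emmek}; any extra transitions only increase the energy.

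The soft parts above are routine; the real difficulty is the uniform-in-$s$ reduction of \eqref{emmeks-MS} to compactly supported profiles, together with the passage to the limit in the fractional term. Since the weight $\tfrac s2$ on the Gagliardo seminorm degenerates, one cannot invoke the local interpolation Lemma \ref{lemmac} as in the $\e\to0$ analysis, and the cut-off Lemma \ref{culemma} cannot be used verbatim: its error carries factors of the form $L^{-2s}$ and $(\sigma T/L)^{2s}$ that tend to $1$ rather than to $0$ as $s\to0^+$, so a crude cut-off to a fixed interval $[-T,T]$ only yields a bound of the type $\liminf_{s\to0^+}m^{MS}_{k+s}\ge(1+\delta+C_\delta)^{-1}m_k$, which is useless as $\delta\to0$ since $C_\delta\to+\infty$. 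The fix must be a sharper cut-off and a sharper Maz'ya--Shaposhnikova inequality that keep track of the dependence on the truncation length $T$, so that $T=T(s)$ can be tuned to make the cut-off error and the loss in the fractional lower bound vanish simultaneously. This quantitative bookkeeping --- the counterpart here of the delicate localized interpolation inequalities of \cite{Solci} --- is where I expect the work to concentrate.
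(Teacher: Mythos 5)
Your route is the same as the paper's: the paper's proof is a two-line sketch saying that one repeats the continuity-from-the-left argument with the Maz'ya--Shaposhnikova asymptotics applied to $u^{(k)}$ in place of $v$, i.e.\ exactly your scheme of (a) an upper bound from the bounded-interval formula \eqref{emmetildek} with $v^{(k)}$ compactly supported, and (b) a lower bound obtained by reducing almost-minimizers of \eqref{emmeks-MS} to profiles that are constant outside a bounded set, so that the compact-support hypothesis of \cite{Ms} applies. Your upper bound is correct as written.

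On the lower bound, the obstruction you isolate is genuine and is precisely what the paper's ``the proof is the same as for $s\to1^-$'' glosses over: for $s\to1^-$ the cut-off error of Lemma \ref{culemma} carries factors $L^{-2s}\le L^{-1}$, so a fixed large $L$ (hence a fixed $T$) works uniformly in $s$, whereas for $s\to0^+$ these factors tend to $1$ and the uniform reduction to a fixed $T$ requires a separate argument (either tracking how the degenerating weight $\tfrac s2$ propagates into each error term of the cut-off computation, or letting $T=T(s)$ grow and re-examining the Maz'ya--Shaposhnikova step). So you have correctly located where the work concentrates rather than missed an idea the paper supplies. Two further steps need tightening. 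First, you apply a $\liminf$ inequality for the functionals \eqref{MS-s} along the varying family $v_s^{(k)}$, but the $\Gamma$-convergence statement is with respect to a specific topology on compactly supported functions, while your normalization only yields convergence of $v_s$ in measure; you must upgrade this to a convergence of $v_s^{(k)}$ compatible with that $\Gamma$-limit (e.g.\ an equi-bound on $\int_{\mathbb R}|v_s^{(k)}|^2$ extracted from the fractional term itself). Second, the justification that the limit $v$ is admissible in \eqref{emmek} because ``each transition carries energy bounded below since $m_k>0$'' is circular ($m_k$ concerns the limit functional, not the functionals at parameter $s$); the non-constancy of $v$ should instead come from pinning one transition at the origin and from the uniform-in-$s$ version of the compactness argument of Theorem \ref{compth} and Remark \ref{limitat} that you already invoke.
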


\begin{proof} The proof is the same as for $s\to 1^-$ but using the Maz'ya and Shaposhnikova result applied with $u^{(k)}$ in the place of $v$.
\end{proof}

\begin{remark}[continuous interpolations for $k+s\ge \frac12$]\rm
Taking into account the `corrections' for $k\ge 1$ and $s\to 0^+$ and $s\to 1^-$ at the same time, one can construct functionals
$$
\int_I W(u)dx+ \e^{2(k+s)}\alpha_s\int_{\mathbb R^2}\frac{|v^{(k)}(x)-v^{(k)}(y)|^2}{|x-y|^{1+2s}} dx\,dy 
$$
that interpolate uniformly between functionals \eqref{k} in the sense of expansion by $\Gamma$-convergence \cite{BT}.  In \cite{Solci} the choice is $\alpha_s=s(1-s)2^{s-1}$. 
\end{remark}

\subsection{Some notes on the higher-dimensional case}
The higher-dimensional case, when we replace the interval $I$ with an open set $\Omega$ in $\mathbb R^d$ and the functionals have the form 
 \begin{equation}\label{k+s-hd}
F^{k+s}_\e(u)=\frac1\e\int_\Omega W(u)dx+ \e^{2(k+s)-1} \int_{\Omega\times \Omega}\frac{\|\nabla^ku(x)-\nabla^ku(y)\|^2}{|x-y|^{d+2s}}dx\,dy
\end{equation}
is more technical. We note however that the compactness result follows from the one-dimensional result by Solci.

\begin{proposition}[equi-coerciveness]
Let $k\ge 0$, $s\in (0,1)$ with $k+s>\frac12$, and let $u_\e$ be such that $F^{k+s}_\e(u_\e)\le S<+\infty$.
We assume that the potential $W$ also satisfies a $2$-polynomial growth condition.
Then, up to subsequences, $u_\e$ converges in $L^2(\Omega)$ to some $u\in BV(\Omega;\{-1,1\})$.
\end{proposition}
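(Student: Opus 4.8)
\noindent\emph{Proof proposal.} The plan is to deduce the statement from Solci's one-dimensional results --- the compactness Theorem~\ref{compth} and the liminf inequality of the one-dimensional sharp-interface limit --- by slicing $\Omega$ along lines, and to use the quadratic growth of $W$ to upgrade convergence in measure to convergence in $L^2$. First I would record the a priori bounds coming from $F^{k+s}_\e(u_\e)\le S$: one has $\int_\Omega W(u_\e)\,dx\le S\e\to 0$ and, since $W(z)\ge c_1|z|^2-c_2$ with $c_1>0$, the uniform bound $\|u_\e\|_{L^2(\Omega)}^2\le c_1^{-1}(S\e+c_2|\Omega|)$; up to a subsequence $u_\e\rightharpoonup u$ weakly in $L^2(\Omega)$. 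Writing $B_\e^\eta=\{x:\mathrm{dist}(u_\e(x),\{-1,1\})>\eta\}$ (with $\eta<1$) and observing that $\inf\{W(z):\mathrm{dist}(z,\{-1,1\})\ge\eta\}>0$ by (i), (iii) and continuity of $W$, one gets $|B_\e^\eta|\to0$. Since $u_\e^2\le c_1^{-1}(W(u_\e)+c_2)$ pointwise, $\int_{B_\e^\eta}u_\e^2\le c_1^{-1}\big(\int_\Omega W(u_\e)+c_2|B_\e^\eta|\big)\to0$, whereas $|u_\e^2-1|\le 3\eta$ on $\Omega\setminus B_\e^\eta$; letting $\e\to0$ and then $\eta\to0$ yields
\[
\int_\Omega u_\e^2\,dx\longrightarrow|\Omega|.
\]

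\noindent Next I would slice. For $\xi\in S^{d-1}$ and $z\in\xi^\perp$ set $u_\e^{\xi,z}(t)=u_\e(z+t\xi)$ on $\Omega^{\xi,z}=\{t\in\mathbb R:z+t\xi\in\Omega\}$. Passing to polar coordinates $y=x+\rho\xi$ in the double integral of the nonlocal term, bounding $\|\nabla^ku_\e(x)-\nabla^ku_\e(y)\|\ge|(\nabla^ku_\e(x)-\nabla^ku_\e(y))[\xi^{\otimes k}]|$, which along the corresponding line equals $|(u_\e^{\xi,z})^{(k)}(t)-(u_\e^{\xi,z})^{(k)}(\tau)|$, and using Fubini for the potential term, I expect a dimensional constant $c_d>0$ with
\[
\int_{S^{d-1}}\!\int_{\xi^\perp}F^{k+s}_\e\big(u_\e^{\xi,z},\Omega^{\xi,z}\big)\,dz\,d\sigma(\xi)\le c_d^{-1}\,F^{k+s}_\e(u_\e,\Omega)\le c_d^{-1}S.
\]
By Fatou, for $\sigma$-a.e.\ $\xi$ and a.e.\ $z\in\xi^\perp$ one has $\liminf_\e F^{k+s}_\e(u_\e^{\xi,z},\Omega^{\xi,z})<+\infty$, and $\int_{S^{d-1}}\int_{\xi^\perp}\liminf_\e F^{k+s}_\e(u_\e^{\xi,z},\Omega^{\xi,z})\,dz\,d\sigma(\xi)\le c_d^{-1}S$. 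Fixing such a $\xi$ and such a $z$, I would apply Theorem~\ref{compth} on the components of $\Omega^{\xi,z}$ along a ($z$-dependent) subsequence realising the liminf to obtain $u_\e^{\xi,z}\to w_z$ in measure with $w_z\in BV(\Omega^{\xi,z};\{-1,1\})$; since a standard Fubini argument shows $u_\e^{\xi,z}\rightharpoonup u^{\xi,z}$ in $L^2(\Omega^{\xi,z})$ for a.e.\ $z$, uniqueness of weak limits forces $w_z=u^{\xi,z}$. Hence $u^{\xi,z}\in BV(\Omega^{\xi,z};\{-1,1\})$ for a.e.\ $z$, and by Fubini $|u|=1$ a.e.\ in $\Omega$.

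\noindent With $|u|=1$ a.e.\ in hand, $\|u\|_{L^2(\Omega)}^2=|\Omega|$, so combining the display from Step~1 with $u_\e\rightharpoonup u$,
\[
\|u_\e-u\|_{L^2(\Omega)}^2=\|u_\e\|_{L^2}^2-2\langle u_\e,u\rangle_{L^2}+\|u\|_{L^2}^2\longrightarrow|\Omega|-2|\Omega|+|\Omega|=0,
\]
that is, $u_\e\to u$ strongly in $L^2(\Omega)$. Along a further subsequence $u_\e\to u$ a.e.\ in $\Omega$, hence $u_\e^{\xi,z}\to u^{\xi,z}$ in measure on $\Omega^{\xi,z}$ for a.e.\ $z$, now along one fixed subsequence. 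The $BV$ bound then follows from the one-dimensional liminf inequality: for $\sigma$-a.e.\ $\xi$ and a.e.\ $z$,
\[
m_{k+s}\,\#\big(S(u^{\xi,z})\big)\le\liminf_{\e\to0}F^{k+s}_\e\big(u_\e^{\xi,z},\Omega^{\xi,z}\big),
\]
and integrating in $z$ and $\xi$, using Fatou and the slicing estimate, gives $\int_{S^{d-1}}\int_{\xi^\perp}\#(S(u^{\xi,z}))\,dz\,d\sigma(\xi)<+\infty$. Thus for $\sigma$-a.e.\ $\xi$ the directional variation $\int_{\xi^\perp}|D(u^{\xi,z})|(\Omega^{\xi,z})\,dz=2\int_{\xi^\perp}\#(S(u^{\xi,z}))\,dz$ is finite; picking $d$ linearly independent such directions, all distributional directional derivatives of $u$ are finite measures, so $u\in BV(\Omega)$, and with $|u|=1$ a.e., $u\in BV(\Omega;\{-1,1\})$.

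\noindent The hard part will be the slicing step: making the inequality between the $d$-dimensional operator-norm Gagliardo seminorm and the sliced one-dimensional energies precise, and --- above all --- the measure-theoretic bookkeeping needed to identify the slices of the weak limit $u$ with the (a priori $z$-dependent) strong limits of the slices of $u_\e$ produced by Theorem~\ref{compth}, since the one-dimensional compactness is only available along subsequences that may depend on the slice. Once this identification is secured, the upgrade from convergence in measure to $L^2$-convergence is forced by the quadratic growth of $W$, and the $BV$ bound is a direct consequence of the one-dimensional liminf inequality.
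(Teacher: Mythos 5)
Your overall route---slicing along lines, the one-dimensional compactness of Theorem~\ref{compth} applied to the slices, and the quadratic growth of $W$ to pass from convergence in measure to convergence in $L^2$---is exactly the strategy the paper indicates (it only points to the slicing characterization of higher-order fractional Sobolev spaces and defers details to \cite{BOP}). The a priori bounds, the computation $\int_\Omega u_\e^2\,dx\to|\Omega|$, the polar-coordinate slicing inequality using $\|\nabla^ku_\e(x)-\nabla^ku_\e(y)\|\ge|(\nabla^ku_\e(x)-\nabla^ku_\e(y))[\xi^{\otimes k}]|$, and the final $BV$ estimate via the one-dimensional liminf inequality are all sound in themselves.

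The genuine gap is precisely the step you flag as ``the hard part'': the identification $w_z=u^{\xi,z}$. You justify it by asserting that ``a standard Fubini argument shows $u_\e^{\xi,z}\rightharpoonup u^{\xi,z}$ in $L^2(\Omega^{\xi,z})$ for a.e.\ $z$''. This is false: weak convergence in $L^2(\Omega)$ does not pass to almost every one-dimensional slice. For instance, $u_\e(x)=g(x_2/\e)$ with $g$ bounded, $1$-periodic and non-constant converges weakly to the mean of $g$, yet its slice in the direction $e_1$ at height $z$ is the constant $g(z/\e)$, which has no limit for a.e.\ fixed $z$. Since Theorem~\ref{compth} only yields convergence along $z$-dependent subsequences, your construction produces limits $w_z$ that cannot be glued into a measurable function of the slice variables, let alone identified with $u$; everything downstream ($|u|=1$ a.e., the strong $L^2$ convergence, and the $BV$ bound for $u$ rather than for the uncoordinated family $\{w_z\}$) then collapses. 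The standard repair is to avoid slice-wise subsequences altogether: extract from the proof of the one-dimensional theorem a quantitative statement of the form ``the number of transitions of $u_\e^{\xi,z}$ between $-1+\eta$ and $1-\eta$ is at most $C\,F^{k+s}_\e(u_\e^{\xi,z},\Omega^{\xi,z})+o(1)$'' with $C$ uniform in $z$ and $\e$, integrate it in $z$ to obtain a uniform translation estimate $\int_{\{x\in\Omega:\,x+h\xi\in\Omega\}}|T_\e(x+h\xi)-T_\e(x)|\,dx\le C(|h|+\e)$ for the truncations $T_\e=(u_\e\vee(-2))\wedge 2$ in $d$ linearly independent directions $\xi$, and then invoke the Fr\'echet--Kolmogorov criterion to get precompactness of the full $d$-dimensional sequence in $L^1(\Omega)$. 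Once $u_\e\to u$ in measure along a single subsequence, your slice-wise identification, the upgrade to $L^2$, and the $BV$ estimate all go through as written.
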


\begin{proof}
The proof follows from the one-dimensional result by a characterization by slicing of higher-order fractional Sobolev spaces \cite{leofrac}. We refer to \cite{BOP} for details. 
\end{proof}

Identifying a function $u\in BV(\Omega;\{-1,1\})$ with the set $A=\{u=1\}$ such that $u=-1+2\chi_A$, the $\Gamma$-limit of $F^{k+s}_\e$ can be written as a perimeter functional 
$$
F_{k+s}(A)=\int_{\partial^*A}\varphi_{k+s}(\nu_A)d{\mathcal H}^{d-1},
$$
with $\partial^*A$ the reduced boundary of $A$ and $\nu_A$ the internal normal to $A$.
The integrand $\varphi_{k+s}$ is a function such that its positively homogeneous extension of degree $1$ is convex (see \cite{AmBra}), and can be characterized through some asymptotic formulas \cite{BOP,MR3748585}.
The form of $\varphi_{k+s}$ depends on the interplay between the norm $\|\cdot\|$ of a $k$-th order tensor and the non-locality of the Gagliardo seminorm, except for the case $k=0$, treated in \cite{SV}, where we have the norm of a $0$-th order tensor; that is, of a scalar quantity. 

With respect to the one-dimensional case, the comparison with the local perturbation by the $k$-th derivative  seems more complex. That local case is part of the work of Brusca, Donati and Solci \cite{BDS}, when $\|\cdot\|$ is the operator norm and an argument by sections can be applied, and has been generalized to arbitrary norms by Brusca, Donati and Trifone \cite{BDT}. In particular, the `correction' of the non-local energies at integer points seems  to require new arguments, at least when the limit energy density $\varphi_{k}$ derived in the local case is not a constant.

\section{Singular perturbations of energies with non-convexity at infinity}
The second type of non-convex potentials $W$ are those with a non-convexity at infinity; that is, such that the  convex envelope of $W$ does not coincide with $W$ on one or two half lines. Typical potentials with a well at infinity are the Lennard-Jones potential and the Perona-Malik potential $W(z)=\log(1+z^2)$.  For such potentials the results are still partial and sparse. In this section we analyze the few known results trying to draw some parallel with the previous sections. Moreover, we will mainly consider a one-dimensional environment.

\subsection{Perturbations of local type}
Up to a translation, it is not restrictive to suppose that $W$ has a minimum point in $0$ and that $W(0)=0$.  
A prototypical $W$ is the {\em truncated quadratic potential} $W(z)=\min\{z^2,1\}$. Since most of the issues are open and independent of the shape of the potential, we will restrict to using this particular potential.

As remarked in the Introduction, it is convenient to use $u'$ as a variable in the place of $u$, and suitably scale the energies, in order to avoid the necessity of defining the $\Gamma$-limit on spaces of measures.
We begin with the perturbation of minimal (integer) order; that is, with a second derivative. After scaling, the energies we consider have the form 
\begin{equation}\label{sc-W}
\int_I \frac1\e W(\sqrt\e u')dx+\e^3 \int_I|u''|^2dx.
\end{equation}
The power $3$ is in accord with the scaling arguments already used for higher-order derivatives, as $3=2k-1$, with  $k=2$.
The reason why we consider the scaling in \eqref{sc-W} for the first integral is that, assuming that $W''(0)>0$, we have 
$$
\frac1\e W(\sqrt\e u')=\frac1\e\Big(\frac12 W''(0)\e |u'|^2 + o(\e|u'|^2)\Big)=\frac12 W''(0) |u'|^2 + o(|u'|^2),
$$
which suggests in a way that $u'\in L^2(I)$. Note that the scaling is not the unique choice (see e.g.~\cite{GP-2}), but in a sense is the most complete and justified as an expansion by $\Gamma$-convergence \cite{BT}.
For the prototypical quadratic energies 
\begin{equation}
F^2_\e(u)=\int_I \min\Big\{|u'|^2,\frac1\e\Big\}dx+\e^3 \int_I|u''|^2dx 
\end{equation}
indeed, this turns out to be true for the approximate differential of $u$. Moreover, 
sequences with equi-bounded energy are precompact with respect to the $L^1$-convergence (up to addition of constants), and the $\Gamma$-limit of $F^2_\e$ in this topology is 
$$
F_2(u)=\int_\Omega|\nabla u|^2dx+ m_2\sum_{S(u)} \sqrt{|u^+(x)-u^-(x)|}
$$
defined in the space $SBV(I)$ (see \cite{bln}). This has been proved by  Alicandro, Braides, and Gelli \cite{ABG} and by Bouchitt\'e, Dubs, and Seppecher \cite{BDS}.

The constant $m_2$ represents the energy necessary to produce a jump discontinuity of size $1$. We reduce to this case by a $1/2$-homogeneity argument. It is important to note that $m_2$ can be interpreted as given by an optimal-profile problem
$$
m_2=\inf_T\min\bigg\{ T+\int_0^T|v''|^2dt: v(0)=0, v(T)=1, v'(0)=v'(T)=0\bigg\},
$$
where the transition between $0$ and $1$ is optimized on an interval of length $T$, which is also a variable of the problem. 
This minimum problem is obtained after scaling by $1/\e$ an interval where the modulus of the derivative of a function $u_\e$ is {\em above the threshold} $1/\sqrt\e$. After defining $v(t)=u(\e t)$, we obtain the homogeneous boundary conditions on $v'$ at the endpoints as $\e\to 0$. This consideration draws a parallel with the phase-transition energies,  in which we have observed that boundary conditions in \eqref{emme1} are automatically determined in the Modica--Mortola case, while this is not the case for higher-order perturbations. 

\smallskip
The general result for perturbations with local higher-order energies is the following theorem by Solci \cite{MR4937467}.

\begin{theorem}[integer higher-order perturbations]
Let $k\ge 2$, $k\in\mathbb N$, and let 
\begin{equation}\label{Fkfd}
F^k_\e(u)=\int_I \min\Big\{|u'|^2,\frac1\e\Big\}dx+\e^{2k-1} \int_I|u^{(k)}|^2dx 
\end{equation}
be defined on $H^k(I)$. Then the functionals are equi-coercive, up to addition of constants, with respect to the convergence in $L^1(I)$, the domain of the limit is a subset of $SBV(I)$, where it takes the form 
$$
F_k(u)=\int_\Omega|\nabla u|^2dx+ m_k\sum_{S(u)} \root k \of{|u^+(x)-u^-(x)|},
$$
and the constant $m_k$ is defined by 
\begin{eqnarray}\label{cpj}\nonumber
m_k&=&\inf_T\min\bigg\{ T+\int_0^T|v^{(k)}|^2dt: v(0)=0, v(T)=1,\\
&& \hskip 1cm v^{(\ell)}(0)=v^{(\ell)}(T)=0 \hbox{ for } \ell\in\{1,\ldots, k-1\}\Big\}.
\end{eqnarray}
\end{theorem}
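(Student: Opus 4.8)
I would follow the standard three-step scheme for $\Gamma$-limits of free-discontinuity type --- equi-coerciveness, liminf inequality, limsup inequality --- organised around one structural observation. If $|u_\e'|$ exceeds the threshold $\e^{-1/2}$ on an interval, then the rescaling $v(t)=u_\e(x_0+\e t)$ turns $\e^{2k-1}\int|u_\e^{(k)}|^2\,dx$ into $\int|v^{(k)}|^2\,dt$ and, over the sub-portion where the truncation is saturated, turns $\int\e^{-1}\,dx$ into the length $T$ of the rescaled interval, so the energy carried near a transition of amplitude $j$ is asymptotically $T+\int_0^T|v^{(k)}|^2\,dt$ with $v(T)-v(0)\simeq j$; the homogeneity substitution $v(t)=j\,w(t/\lambda)$ with $\lambda\asymp|j|^{1/k}$ then reduces this to $m_k\,|j|^{1/k}$, which is exactly the surface density in the statement and explains why the conditions $v^{(\ell)}=0$ at the endpoints appear in \eqref{cpj} --- they arise in the limit as the automatic boundary data of the rescaled profiles, up to the cut-off correction used below. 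For equi-coerciveness I would split $I$ into the good set $G_\e=\{|u_\e'|\le\e^{-1/2}\}$, on which the truncation is inactive so that $\int_{G_\e}|u_\e'|^2\,dx\le F^k_\e(u_\e)$ gives a weak-$L^2$ bound on $u_\e'\mathbf 1_{G_\e}$, and the bad set $B_\e=\{|u_\e'|>\e^{-1/2}\}$, on which the bulk term gives $|B_\e|\le F^k_\e(u_\e)\,\e$. The key point --- the technical core of \cite{MR4937467} --- is that localized higher-order interpolation inequalities, playing here the role of Lemma \ref{lemmac}, force $B_\e$ to be, modulo portions of vanishing variation, a bounded number of intervals of length $O(\e)$ carrying transitions of bounded amplitude; combined with the $L^2$-bound on $G_\e$ this yields a uniform bound on $\|Du_\e\|(I)$, hence $u_\e\to u$ in $L^1(I)$ up to additive constants, and the fine description of $B_\e$ gives $u\in SBV(I)$ with $\nabla u$ the weak-$L^2$ limit of $u_\e'\mathbf 1_{G_\e}$, $D^ju=\sum_i[u](x_i)\delta_{x_i}$ and $\sum_{S(u)}|[u]|^{1/k}<+\infty$.

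For the liminf inequality I would localize and argue measure-theoretically in the De Giorgi--Letta / Fonseca--M\"uller style. On any open $A\Subset I\setminus S(u)$ the bad transitions do not accumulate, so $\liminf_\e\int_A\min\{|u_\e'|^2,\e^{-1}\}\,dx\ge\liminf_\e\int_{A\cap G_\e}|u_\e'|^2\,dx\ge\int_A|\nabla u|^2\,dx$ by weak lower semicontinuity (the higher-order term being a non-negative extra). Around each $x_i\in S(u)$ I would take a small interval $(x_i-\rho,x_i+\rho)$ meeting no other point of $S(u)$, rescale by $\e$, and use a cut-off lemma analogous to Lemma \ref{culemma} to replace the soft boundary data of the rescaled profile by the homogeneous endpoint conditions at asymptotically zero cost; the resulting competitor is admissible for the optimal-profile problem of amplitude $|u^+(x_i)-u^-(x_i)|$, whose value is $m_k\,|u^+(x_i)-u^-(x_i)|^{1/k}$ by the homogeneity reduction, so the localized liminf is at least that. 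Choosing the radii so that finitely many such intervals are pairwise disjoint, adding the bulk contribution on the complement, and taking the supremum over finite subcollections of $S(u)$ yields $\liminf_\e F^k_\e(u_\e)\ge\int_I|\nabla u|^2\,dx+m_k\sum_{S(u)}\root k\of{|u^+-u^-|}$.

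For the limsup inequality, by the usual density arguments in $SBV$ it suffices to treat $u$ piecewise smooth with finitely many jumps (and $\int_I|\nabla u|^2<+\infty$, $\sum_{S(u)}|[u]|^{1/k}<+\infty$). Given $\eta>0$, near each jump $x_i$, of amplitude $j_i$, I would fix a near-optimal admissible $w$ for the amplitude-$j_i$ profile problem (whose infimum equals $m_k|j_i|^{1/k}$), i.e. with $w(0)=0$, $w(T)=j_i$, $w^{(\ell)}(0)=w^{(\ell)}(T)=0$ and $T+\int_0^T|w^{(k)}|^2\,dt\le m_k|j_i|^{1/k}+\eta$, and set $u_\e(x)=u(x_i^-)+w\big((x-x_i)/\e\big)$ on an interval of length $\e T$; for $\e$ small $|u_\e'|$ exceeds $\e^{-1/2}$ wherever $w'\neq0$, so the localized energy of this piece converges to $T+\int_0^T|w^{(k)}|^2\,dt\le m_k|j_i|^{1/k}+\eta$. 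Since the vanishing endpoint derivatives of $w$ do not match the $O(1)$ classical derivatives of $u$ on the two sides, I would insert short matching layers of length $O(\e)$ before and after, on which a bounded interpolation brings the derivatives up to order $k-1$ from their outer values to $0$; such a layer changes $u_\e$ by $O(\e)$ and contributes $O(\e)+O(\e^{2k-1}\cdot\e^{3-2k})=o(1)$. Away from all these shrinking intervals I would keep $u_\e=u$, so $\int|u_\e'|^2\to\int_I|\nabla u|^2$ while $\e^{2k-1}\int|u_\e^{(k)}|^2=O(\e^{2k-1})\to0$; because the perturbation is local there are no cross-interactions to estimate, which is the main simplification with respect to the fractional case. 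Summing and letting $\eta\to0$ gives the matching upper bound.

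The step I expect to be the genuine obstacle is the equi-coerciveness/liminf pair: both rest on \emph{localized} higher-order interpolation inequalities --- the analogues of Lemma \ref{lemmac} and Lemma \ref{culemma} in a regime where the amplitude of the transition is itself an unknown varying with $\e$ --- which is precisely the heavy machinery of \cite{MR4937467}; a secondary difficulty is the clean measure-theoretic splitting of the bulk term $\int|\nabla u|^2$ from the surface term in the liminf inequality, together with the $SBV$ density reduction that legitimises the recovery-sequence construction for a general target.
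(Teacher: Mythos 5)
Your outline reproduces, in more detail, exactly the strategy that the paper itself only sketches before deferring the actual proof to Solci \cite{MR4937467}: rescale by $\e$ around the above-threshold set so that the saturated bulk term becomes the length $T$ and the perturbation becomes $\int|v^{(k)}|^2\,dt$, enforce the homogeneous endpoint conditions $v^{(\ell)}=0$ by localized interpolation inequalities, and reduce to the normalized profile problem by the $1/k$-homogeneous substitution $v(t)=j\,w(t/|j|^{1/k})$ (your scaling count here is correct, as is the $\e^{2k-1}\cdot\e^{3-2k}=\e^{2}$ estimate for the matching layers in the limsup). So in approach there is nothing to object to, and you correctly identify where the weight of the argument lies.

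There is, however, one genuine gap, and it is precisely the point the paper singles out as the crux: the small-jump regime. Your compactness claim that the bad set reduces to ``a bounded number of intervals of length $O(\e)$ carrying transitions of bounded amplitude'' is not justified a priori: only the number of transitions of amplitude at least $\delta$ is bounded for each fixed $\delta>0$, and jumps of arbitrarily small, $\e$-dependent amplitude can occur. For a transition of amplitude $j$, the optimal profile after the homogeneity rescaling lives on a saturated interval of length of order $|j|^{1/k}T$, whereas the interpolation inequalities that produce the endpoint conditions $v^{(\ell)}=0$ require windows whose length is bounded below independently of $j$; since any enlargement of the set treated as ``above threshold'' feeds directly into the length term of the minimum problem \eqref{cpj}, the cut-off is \emph{not} at asymptotically zero cost relative to the target $m_k|j|^{1/k}$, which itself tends to $0$ with $j$. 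This is why \cite{MR4937467} needs fine ad hoc constructions for small jumps together with a relaxation argument, and no soft analogue of Lemma \ref{culemma} closes this step. Your liminf and coerciveness arguments are therefore complete only for jumps of amplitude bounded away from zero; to make them a proof you must either rule out accumulating small transitions or show that their total contribution is correctly captured by relaxation, which is the substantive content of the cited work.
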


The proof of this result is extremely delicate, and we refer to \cite{Solci} for the details of the proof. Again, one issue is, as for phase-transition energies, to enhance the boundary conditions, showing that one can add also conditions on the derivatives of order higher than the first. This can be done using interpolation inequalities, possibly enlarging the intervals where functions have to be considered `above the threshold'. Unfortunately, the length of these intervals itself concurs in the determination of $m_k$, and the corresponding minimum problems can be extremely sensitive to the change of length if the jump size is small, when the $1/k$-homogeneity argument hinted at above is applied. This second issue can be solved by fine constructions when we have small jumps, and by the use of relaxation results. 

\begin{remark}[the higher-dimensional case]\rm
In \cite{MR4937467} the higher-dimensional case has been treated for a perturbation using the operator norm of the tensor of the $k$-th order derivatives, obtaining  $F_k$ with the corresponding form as in  \eqref{Fkfd}. The case of norms different from the operator one is open. Note in particular that it is not clear how to generalize the formula for $m_k$ to obtain a function $\varphi_k$, which now may depend not only on the normal to $S(u)$ but also to the jump size. We note that the result with the operator norm is sufficient to obtain equi-coerciveness in the general case by comparison.
\end{remark}

\subsection{A result for fractional perturbations}
While the case of general higher-order fractional perturbations seems technically complex, we can treat the case  when  $k=1$ and $s\in(0,1)$, which may be regarded as a variation of the result in \cite{ABG}. The functionals 
$$
F^{1+s}_\e(u)=\int_I \min\Big\{|u'|^2,\frac1\e\Big\}dx+\e^{2s +1}\int_I\int_I \frac{|u'(x)-u'(y)|^2}{|x-y|^{1+2s}}dx\,dy,
$$
defined on $H^{1+s}(I)$, have been studied by Braides and Vitali \cite{BV}. Note that the exponent $2s+1=2(k+s)-1$ with $k=1$ is given by the usual scaling.

With \eqref{cpj} in mind, the corresponding constant $m_{1+s}$ can be written in the form of \eqref{emmetilde}
with the non-local part integrated on the whole line, as
\begin{eqnarray*}
&&m_{1+s}=\inf_T\min\bigg\{ T+\int_{\mathbb R}\int_{\mathbb R} \frac{|v'(x)-v'(y)|^2}{|x-y|^{1+2s}} dx\,dy:  v\in H^{1+s}_{\rm loc}(\mathbb R)\ \qquad\ \qquad\ \\ 
&&\hskip4cm v(t)=0 \hbox{ for }t\le 0, v(t)=1 \hbox{ for }t\ge T\Big\}.
\end{eqnarray*}
With this definition, we can describe the asymptotics of $F^{1+s}_\e$ as $\e\to 0$ as follows.

\begin{theorem} The functionals $F^{1+s}_\e$ are equi-coercive, up to addition of constants, with respect to the $L^1(I)$-convergence. The corresponding $\Gamma$-limit is 
$$
F_{1+s}(u)= 
\int_I|u'|^2dx+ m_{1+s}\int_{S(u)} |u^+(x)-u^-(x)|^{\frac1{1+s}}\, d{\mathcal H}^{d-1},
$$
defined on $SBV(I)$, where 
\end{theorem}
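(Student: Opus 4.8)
\emph{Strategy and equi-coerciveness.} The plan is to adapt the two-scale scheme used for the second-derivative case in \cite{ABG} (see also \cite{BV}), replacing the local perturbation $\int|v''|^2$ by the Gagliardo seminorm $[v']_{H^s(A)}^2:=\int_A\int_A\frac{|v'(x)-v'(y)|^2}{|x-y|^{1+2s}}\,dx\,dy$ and exploiting two features of this seminorm: its super-additivity on disjoint sets, and the $\tfrac1{1+s}$-homogeneity of the one-dimensional transition cost --- the quantity $|\{v'\neq0\}|+[v']_{H^s(\mathbb R)}^2$, infimized over functions $v$ joining $0$ to $z$, equals $m_{1+s}|z|^{1/(1+s)}$, by the rescaling $v(t)\mapsto z\,v(t/\lambda)$ and optimization in $\lambda$, after reducing to monotone $v$ (replace $v'$ by its positive part and renormalize) and to $v'$ supported on an interval (fractional P\'olya--Szeg\H{o} rearrangement). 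Unlike the higher integer-order case, $k=1$ makes this last identification relatively clean, and only a zeroth-order ``safe zone'' is needed in the cut-offs. For equi-coerciveness: if $F^{1+s}_\e(u_\e)\le C$, split $I$ into the supercritical set $A_\e:=\{|u_\e'|>\e^{-1/2}\}$ and its complement; then $|A_\e|\le C\e\to0$ and $\int_{I\setminus A_\e}|u_\e'|^2\le C$, so, up to additive constants, $\{u_\e\}$ is precompact in $L^1(I)$ with limits $u\in BV(I)$, $\nabla u\in L^2(I)$. That $u\in SBV(I)$ with finite jump energy and no Cantor part follows by noting (via the profile estimate below, super-additivity and the positivity of the first term) that the oscillation carried by the vanishing sets $A_\e$ must concentrate at points, each concentration of amplitude $z$ costing at least $m_{1+s}z^{1/(1+s)}$ in the limit.

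\emph{Lower bound.} Let $u$ be in the domain and $u_\e\to u$ in $L^1(I)$ with $\liminf_\e F^{1+s}_\e(u_\e)<+\infty$. Choosing pairwise disjoint intervals $I_j^\rho=(t_j-\rho,t_j+\rho)$ around the jump points of $u$, super-additivity of the double integral gives
\[
F^{1+s}_\e(u_\e,I)\ \ge\ \sum_j F^{1+s}_\e(u_\e,I_j^\rho)\ +\ \int_{I\setminus\bigcup_jI_j^\rho}\min\{|u_\e'|^2,\tfrac1\e\}\,dx.
\]
On the complement, $\min\{|u_\e'|^2,\tfrac1\e\}\uparrow|u_\e'|^2$ and standard lower-semicontinuity give $\liminf_\e\ge\int_{I\setminus\bigcup_jI_j^\rho}|u'|^2$, which tends to $\int_I|u'|^2$ as $\rho\to0$. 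Near a jump $t_j$, a blow-up argument as in \cite{ABG} yields $w_\e(\tau):=u_\e(t_j+\e\tau)\to w$ in $L^1_{\rm loc}(\mathbb R)$ with $w\in H^{1+s}_{\rm loc}(\mathbb R)$ and $w(\mp\infty)=u(t_j^\mp)$; the rescaling is exact, $F^{1+s}_\e(u_\e,I_j^\rho)=\int_{-\rho/\e}^{\rho/\e}\min\{\e^{-1}|w_\e'|^2,1\}\,d\tau+[w_\e']_{H^s((-\rho/\e,\rho/\e))}^2$, so Fatou, letting the rescaled window exhaust $\mathbb R$, gives $\liminf_\e F^{1+s}_\e(u_\e,I_j^\rho)\ge|\{w'\neq0\}|+[w']_{H^s(\mathbb R)}^2\ge m_{1+s}\,|u(t_j^+)-u(t_j^-)|^{1/(1+s)}$ by the transition-cost identification above. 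Summing over finite subfamilies of $S(u)$, then taking the supremum and letting $\rho\to0$, gives $\liminf_\e F^{1+s}_\e(u_\e)\ge F_{1+s}(u)$.

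\emph{Upper bound.} By a density argument it suffices to treat $u\in SBV(I)$ with $\nabla u\in L^2(I)$ and finitely many jumps $t_1,\dots,t_N$. For each $j$ pick, using the definition of $m_{1+s}$ and the homogeneity rescaling, $L_j>0$ and $v_j\in H^{1+s}_{\rm loc}(\mathbb R)$ with $v_j\equiv u(t_j^-)$ on $(-\infty,0]$, $v_j\equiv u(t_j^+)$ on $[L_j,+\infty)$, and $L_j+[v_j']_{H^s(\mathbb R)}^2\le m_{1+s}|u(t_j^+)-u(t_j^-)|^{1/(1+s)}+\eta$. Set $u_\e$ equal to $v_j\big((\cdot-t_j)/\e\big)$ on $(t_j,t_j+\e L_j)$ and, elsewhere, to a mollification of $u$ at a scale $\delta_\e\to0$ chosen (by a diagonal argument) so that $|u_\e'|\le\e^{-1/2}$ on that region --- whence the truncation is inactive there --- and so that $\e^{2s+1}[u_\e']_{H^s(I)}^2\to0$; the safe zones make $u_\e$ an admissible $H^{1+s}_{\rm loc}$ function with no added jump, the $O(\e)$ mismatch at the junctions being absorbed in a negligible layer. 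Then by dominated convergence the first term tends to $\int_I|u'|^2+\sum_j|\{v_j'\neq0\}|\le\int_I|u'|^2+\sum_jL_j$; in the non-local term the bulk self-interaction vanishes by the choice of $\delta_\e$, and each cross-interaction between two distinct pieces is $O(\e^{2s+1})$ times a bounded quantity (each inserted profile makes $\int|u_\e'|^2$ of order $\e^{-1}$ on a set of measure $\e$), so only $\sum_j[v_j']_{H^s(\mathbb R)}^2$ survives. Hence $\limsup_\e F^{1+s}_\e(u_\e)\le\int_I|u'|^2+\sum_j\big(L_j+[v_j']_{H^s(\mathbb R)}^2\big)\le F_{1+s}(u)+N\eta$, and $\eta\to0$ concludes.

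\emph{Main obstacle.} The crux is the jump lower bound: the passage from a sequence $w_\e$ that only approximately realizes the transition between $u(t_j^-)$ and $u(t_j^+)$ and satisfies the ``constant outside a bounded interval'' constraint only in the limit, to the sharp value $m_{1+s}|u(t_j^+)-u(t_j^-)|^{1/(1+s)}$. This needs, on the one hand, the blow-up compactness in the free-discontinuity setting and, on the other, the exact identification of the transition cost $|\{v'\neq0\}|+[v']_{H^s(\mathbb R)}^2$: here the profile problem is sensitive to the length of the transition interval, which the rescaling ties to the jump size, and it is precisely the super-additivity of the seminorm --- which permits localizing and discarding the long-range interactions --- together with the monotone reduction and the fractional P\'olya--Szeg\H{o} inequality that keeps this under control. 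The residual $\e$ in the rescaled first term, absent in the phase-transition analysis, is what makes the ``above threshold'' bookkeeping delicate.
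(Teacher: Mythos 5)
The paper does not actually prove this theorem: it defers entirely to \cite{BV}, recording only that for $s>\tfrac12$ the conditions $v'(0)=v'(T)=0$ suffice for compactness along the lines of \cite{ABG}, and that restricting the double integral to $(0,T)\times(0,T)$ in the definition of $m_{1+s}$ yields a sub-optimal lower bound. Your outline is consistent with both of these points --- in particular you correctly keep the Gagliardo seminorm over all of $\mathbb R^2$ in the optimal-profile problem and you get the $\tfrac1{1+s}$-homogeneity of the transition cost right --- and it follows the expected blow-up scheme of \cite{ABG}. However, there is a concrete flaw in your upper bound. For $s\ge\tfrac12$ a function whose derivative has a jump discontinuity is not in $H^{1+s}$ on any neighbourhood of that point (the Gagliardo integral of $u_\e'$ diverges logarithmically at $s=\tfrac12$ and worse above it). Your recovery sequence juxtaposes the rescaled profile $v_j((\cdot-t_j)/\e)$, whose derivative vanishes at the junction, with a mollification of $u$, whose derivative does not; so $u_\e'$ has an $O(1)$ jump at each junction and $F^{1+s}_\e(u_\e)=+\infty$ for $s\ge\tfrac12$. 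A ``zeroth-order safe zone'' is therefore not enough: the matching must happen at the level of $u_\e'$, e.g.\ by first replacing $u$ (via a density-in-energy argument) with a function that is constant near each $t_j$, or by a genuine cut-off lemma in the spirit of Lemma \ref{culemma}. The same applies to the ``$O(\e)$ mismatch absorbed in a negligible layer'': any corrective layer must itself be $H^{1+s}$-smooth.

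The second gap is in the jump lower bound, which you yourself flag as the crux but then dispatch by assertion. The energy bound gives no a priori $L^2$ or even $L^1$ control on $w_\e'$ on the supercritical set, so the existence of a blow-up limit $w\in H^{1+s}_{\rm loc}(\mathbb R)$ with $w(\pm\infty)=u(t_j^\pm)$ is not automatic; one must show that the supercritical set near $t_j$ has measure $O(\e)$ and carries essentially the whole jump (otherwise the transition can split into pieces at mutually diverging distances in the blow-up scale, and you then need the subadditivity of $z\mapsto|z|^{1/(1+s)}$ to recombine them). Likewise, $\liminf_\e\int\min\{\e^{-1}|w_\e'|^2,1\}\ge|\{w'\neq0\}|$ requires pointwise (not merely weak) convergence of $w_\e'$; the safer route is to bound the first term from below by the measure of the supercritical set and work with that quantity directly, which also reconciles your $|\{v'\neq0\}|$ with the $T$ appearing in the paper's definition of $m_{1+s}$. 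These are exactly the ``above threshold'' bookkeeping and enhanced-boundary-condition issues that the paper emphasizes as the delicate part of \cite{Solci} and \cite{BV}; as written, your argument names them but does not supply them.
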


For a proof we refer to \cite{BV}. We only note that if $s>\frac12$ the boundary conditions $v'(0)=v'(T)=0$ are (meaningful and) enough to obtain compactness following the argument of \cite{ABG}, but limiting the double integral to $(0,T)\times (0,T)$ in the definition of $m_{1+s}$ gives a sub-optimal lower bound.

\begin{remark}[some open problems]\rm
Beside the extension to higher-order fractional derivatives and to higher dimension, we note that it could be interesting to analyze the behaviour at integer points, in particular starting from the one-dimensional case with $k=1$ and $s$ tending to $0$ or tending to $1$. Note that in the first case we expect a problem of linear growth defined on $BV(I)$.
\end{remark}

\noindent{\bf Acknowledgements.}
The author gratefully acknowledges the active participation of the students of the course at SISSA. The author is a member of GNAMPA of INdAM.

\bibliographystyle{abbrv}

\bibliography{references}

\end{document}